\newtheorem{remark}{Remark}
\newtheorem{definition}{Definition}
\newtheorem{proposition}{Proposition}
\newtheorem{lemma}{Lemma}
\newcommand{\norm}[1]{\left \| #1 \right \|}
\newcommand{\Va}{\boldsymbol{a}}
\newcommand{\Vx}{\boldsymbol{x}}
\newcommand{\Vy}{\boldsymbol{y}}
\newcommand{\Vu}{\boldsymbol{u}}
\newcommand{\Ve}{\boldsymbol{e}}
\newcommand{\Vr}{\boldsymbol{r}}
\newcommand{\Vs}{\boldsymbol{s}}
\newcommand{\Vg}{\boldsymbol{g}}
\newcommand{\Valp}{\boldsymbol{\alpha}_r}
\newcommand{\Vf}{\boldsymbol{f}}
\newcommand{\Vh}{\boldsymbol{h}}
\newcommand{\VXi}{\boldsymbol{\Xi}}
\newcommand{\Vxi}{\boldsymbol{\xi}}
\newcommand{\Vv}{\boldsymbol{v}}
\newcommand{\MA}{\mathbf{A}}
\newcommand{\MC}{\mathbf{C}}
\newcommand{\MM}{\mathbf{M}}
\newcommand{\MI}{\mathbf{I}}
\newcommand{\MV}{\mathbf{V}_r}
\newcommand{\MP}{\boldsymbol{\Pi}}
\newcommand{\MPm}{\mathbf{P}_m}
\newcommand{\MHm}{\mathbf{H}_m}
\newcommand{\Ind}{\boldsymbol{1}}
\newcommand{\RR}{\mathbb{R}}
\newcommand{\Exp}{ \mathbb{E}}
\DeclareMathOperator{\spann}{\mathrm{span}}
\title{Dynamical model reduction method for solving parameter-dependent dynamical systems
\thanks{This work was supported by the french research group {GDR MoMaS}.}
}
\author{Marie BILLAUD-FRIESS\footnotemark[2]\footnotemark[3]  and Anthony NOUY\footnotemark[2]}
\begin{document}
\maketitle

\renewcommand{\thefootnote}{\fnsymbol{footnote}}
\footnotetext[2]{Ecole Centrale de Nantes, GeM, UMR CNRS 6183, France.}
\footnotetext[3]{Corresponding author (marie.billaud-friess@ec-nantes.fr).}
\renewcommand{\thefootnote}{\arabic{footnote}}

%\slugger{mms}{xxxx}{xx}{x}{x--x}%slugger should be set to mms, siap, sicomp, sicon, sidma, sima, simax, sinum, siopt, sisc, or sirev

\begin{abstract}
We propose a projection-based model order reduction method 
for the solution of parameter-dependent dynamical systems. The proposed method relies on the construction of time-dependent reduced spaces generated from evaluations of the solution of the full-order model at some selected parameters values. The approximation obtained by Galerkin projection is the solution of a reduced dynamical system with a modified flux which takes into account the  time dependency of the reduced spaces. An a posteriori error estimate is derived  and a greedy algorithm using this error estimate is proposed for the adaptive selection of parameters values. The resulting method can be interpreted as a dynamical low-rank approximation method with a subspace point of view and a uniform control of the error over the parameter set. 
\end{abstract}

%%%%%%%%%%%%%%%%%%%%%%%%%%%%%%%%%%%%%%
% TABLE DES MATIERES
%%%%%%%%%%%%%%%%%%%%%%%%%%%%%%%%%%%%%%

\section{Introduction}\label{sec:intro}

Parameter-dependent equations are considered in many problems of scientific computing such as optimization, control or uncertainty quantification. For complex numerical models, 
model order reduction methods are usually required for an efficient estimation of the solution for many values of the parameters (multi-query context). Classical model order reduction methods for parameter-dependent equations are the Reduced Basis (RB) method \cite{Haasdonk2014}, the  Proper Orthogonal Decomposition  (POD) method \cite{Volkwein2008} or the Proper Generalized Decomposition (PGD) method \cite{Nouy2010}. These methods can be interpreted as  low-rank approximation methods  with different constructions of the approximation for different controls of the error over the parameter set (uniform control for RB or control in mean-square sense for POD and PGD, see, e.g., \cite{Nouy2015}) .

This paper is concerned with the solution of parameter-dependent non-autonomous dynamical systems of the form
 \begin{equation}
\left\{ 
\begin{aligned}
\Vu'(t,\Vxi)&=  \Vf(\Vu(t,\Vxi),t,\Vxi), 
\\
\Vu(0,\Vxi)&= \Vu^0(\Vxi),
\end{aligned}
\right.
\label{eq:dynsystgen}
\end{equation}
where the flux $\Vf$ and initial condition depend on some parameters $\Vxi$ with values in a parameter set $\VXi $. 
%We assume that  $\Vf$ is Lipschitz continuous with respect to its first variable $\Vu$. 
The solution $\Vu(t,\Vxi)$ belongs to the high-dimensional state space $X=\mathbb{R}^d$. 
Regarding time-dependent problems, different model order reduction (MOR) methods have been considered in the literature.  In the context of RB methods, an approximation  is obtained by a (Petrov-)Galerkin projection of the solution onto a time-independent low-dimensional space $X_r$ (so-called reduced space) of $X$, which results in an approximation of the form 
\begin{equation}
\Vu(t,\Vxi) \approx \sum_{i=1}^r \Vv_i \alpha_i(t,\Vxi),
\label{eq:RB}
\end{equation}
where $\{ \Vv_i \}_{i=1}^r$ constitutes a basis of $X_r$. 
Different methods have been proposed for the construction of 
time-independent reduced spaces $X_r$ (see, e.g., \cite{Bui2008,Baur2015} for a review of such methods). In \cite{Grepl2005}, $X_r$ is obtained as the span of snapshots $\Vu(t^k,\Vxi^i)$ (in both time and parameter) of the  solution of the full-order model.  However, for a high-dimensional state space $X$, it is not feasible (and far from optimal) to retain a large number of snapshots. Then, one can rely on a POD of the snapshots matrix in order to extract subspaces which are optimal in a mean-square sense. A more popular approach has been considered in \cite{Eftang2011,Haasdonk2008,Haasdonk2011,Haasdonk2013} for linear evolution problems, with an adaptive construction of the reduced space by a POD-greedy algorithm using a posteriori error estimates. At each iteration of this algorithm, the reduced space is enriched by the dominant modes of the POD of a trajectory $\Vu(\cdot,\Vxi^i)$ of the full-order model, where $\Vxi^i$ maximizes over the parameters set an estimate of the current approximation error.  This strategy has also been considered in \cite{Nguyen2009,Drohmann2012,Janon2013,Wirtz2014} for the solution of nonlinear problems, including nonlinear dynamical systems. In \cite{Wirtz2014}, it was combined with a discrete variant of the Empirical Interpolation Method \cite{Barrault2004,Chaturantabut2010} for the approximation of nonlinear terms  and for the efficient evaluation of the error estimate which is required in the greedy algorithm.

PGD method has been considered in \cite{Nouy2008,Nouy2010} for the low-rank approximation of the solution of stochastic evolution equations, with an approximation of the form 
\begin{align}
\Vu(t,\Vxi) \approx \sum_{i=1}^r \Vv_i(t) \alpha_i(\Vxi),
\label{eq:PGD-STPG}
\end{align}
which is seen as a rank-$r$ element in the tensor space $X^{[0,T]}\otimes \mathbb{R}^{\VXi}$. This approach adopts a variational approach in time. The resulting approximation can be seen as a projection of $\Vu(t,\Vxi) \in X$
onto a time-dependent reduced space 
$$X_r(t) = \mathrm{span}\{\Vv_1(t),\hdots ,\Vv_r(t) \},$$
 which allows to well capture transient phenomena. However, 
the projection is obtained with a global in time variational principle and is not optimal at each instant $t$. 
{In the context of parameter dependent equations, reduced basis methods based on  Petrov-Galerkin space-time (PG-ST) formulations have also been introduced, see for example \cite{Steih2011,Urban2012}. Such an approach provides a low-rank approximation of the form \eqref{eq:PGD-STPG}. 
At the discrete level, it differs from usual reduced basis approaches, introduced e.g. in \cite{Haasdonk2008}, since they do not rely on time-stepping scheme except in very particular cases. For a detailed comparison with standard RB method, see the recent review \cite{Glas2016} .}

Dynamical low-rank approximation methods have been considered in {\cite{Cheng2013-1,Cheng2013-2,Koch2007,Musharbash2014,Sapsis2009}}, with different types of constructions but an approximation of the same form 
\begin{align}
\Vu(t,\Vxi) \approx \sum_{i=1}^r \Vv_i(t) \alpha_i(t,\Vxi), \label{approxformdynamical}
\end{align}
 which at each instant $t$ can be interpreted as a rank-$r$ approximation in the tensor space $X \otimes \mathbb{R}^{\VXi}$. Again, the approximation can be seen as a projection onto a time-dependent reduced space $X_r(t)$ but here, the projection is obtained through principles which are local in time (e.g., Dirac-Frenkel principle). This results in a reduced order model which takes the form of a dynamical system. 
\\

In this paper, we propose a new model order reduction method for solving  
general parameter-dependent dynamical systems of the form \eqref{eq:dynsystgen}. This method provides an approximation of the form \eqref{approxformdynamical}, so that it can be interpreted as a dynamical low-rank approximation method. However, the proposed method differs from existing dynamical low-rank approximation methods in that it adopts a subspace point of view and provides a uniform control of the error over the parameter set. 
%{In that sense, it is closer to a RB method with time-dependent reduced spaces,
%but it remains different from the PG-ST reduced basis strategy since it provides an approximation under the form \eqref{approxformdynamical} in place of \eqref{eq:PGD-STPG}.} 
The reduced space $X_r(t) \subset X$ is here obtained as the span of some selected trajectories $\{\Vu(\cdot,\Vxi^i)\}_{i=1}^r$ of the full-order model  \eqref{eq:dynsystgen}. The approximation is obtained by solving a reduced dynamical system of size $r$ obtained by Galerkin projection (see, e.g., \cite{Benner2013}), which has the form of the initial dynamical system with a modified  flux which takes into account  the time dependency of the subspace. The resulting approximation (when discarding some necessary numerical approximations) interpolates the solution map $\Vxi \mapsto \Vu(\cdot,\Vxi)$ at points $\{\Vxi^1,\hdots,\Vxi^r\}$. An  a posteriori error estimate $\Delta_r(t,\Vxi)$ (local in time) is derived in the lines of 
\cite{Wirtz2014} using the logarithmic Lipschitz constant  associated to the flux. This error estimate is used in a  greedy procedure for the adaptive selection of interpolation points, where at step $r$, the next interpolation point corresponds to a maximizer over the parameters set $\VXi$ of a certain norm of $t \mapsto \Delta_r(t,\Vxi)$.\\

The paper is organized as follows. In Lemma \ref{sec:reductionmethod}, we introduce the Galerkin method for the projection of the dynamical system \eqref{eq:dynsystgen} onto a time-dependent reduced space and 
we derive an a posteriori error estimate.
In Lemma \ref{sec:lowrank}, we present strategies for the 
construction of reduced spaces $X_r$, including the classical POD-greedy strategy for the construction of time-independent reduced spaces, and the proposed greedy algorithm for the construction of  time-dependent reduced spaces.
In  Lemma \ref{sec:practical_aspects}, we detail some practical aspects for an implementation of the proposed method in a discrete time setting, and for obtaining  \emph{online}  computations (solution of the reduced dynamical system and evaluation of the error estimate) with a complexity independent of the dimension $d$ of the full-order model.  
In  Lemma \ref{sec:numres},  the proposed method is illustrated through numerical experiments on several test cases and compared to the POD-greedy approach. 

%%%%%%%%%%%%%%%%%%%%%%%%%%%%%%%%%%%%%%%%%%
%%%%%%%%%%%%%%%%%%%%%%%%%%%%%%%%%%%%%%%%%%
%%%%%%%%% Reduced dynamical system %%%%%%%%%%%%%%%%%
%%%%%%%%%%%%%%%%%%%%%%%%%%%%%%%%%%%%%%%%%%
%%%%%%%%%%%%%%%%%%%%%%%%%%%%%%%%%%%%%%%%%%

\section{Reduced dynamical system}\label{sec:reductionmethod}

In this section, we first propose a Galerkin method for computing a projection of the solution of the dynamical system \eqref{eq:dynsystgen}
onto a time-dependent subspace $X_r(t)$ of $X$. Then, we derive an a posteriori error estimate. Here, the reduced space 
$X_r(t)$ is supposed to be given.

\subsection{Projection}\label{subsec:proj}
The state space $X=\RR^d$ is equipped with the canonical inner product $\langle \Vx,\Vy \rangle_X := \Vx^T \Vy$ and associated norm  $\|\Vx\|_X$. 
We assume that 
\begin{align}
\dim X_r(t) =r \quad \text{for all } t>0.\label{ass:dim}
\end{align}
We denote by $\{\Vv_1(t),\hdots,\Vv_r(t)\}$ an orthonormal basis of $X_r(t)$ and we introduce the  orthogonal matrix 
 $$
\MV(t)=[\Vv_1(t),\dots,\Vv_r(t)]\in \RR^{d \times r},
$$
with $\MV^T(t)\MV(t) =\MI_r$, where $\MI_r$ is the identity matrix of size $r\times r$. We denote by 
$\MP_{X_r}(t) = \MV(t)\MV^T(t) \in \RR^{d\times d}$ the orthogonal projector onto $X_r(t)$.\\

 We define the approximation  $\Vu_r(t,\Vxi)$ of $\Vu(t,\Vxi)$ by projecting the equations of the dynamical system \eqref{eq:dynsystgen} onto $X_r(t)$
%\begin{align*}
%\MV(t)^T\Vu'_r(t,\Vxi) =  \MV(t)^T\Vf(\Vu_r(t),t,\Vxi), \quad t>0,%\label{eq:projdyn}
%\end{align*}
%or equivalently by
\begin{equation}
\left\{\begin{aligned}
\MP_{X_r}(t) \Vu'_r(t,\Vxi) &=  \MP_{X_r}(t)\Vf(\Vu_r(t,\Vxi),t,\Vxi),
\\
\Vu_r(0,\Vxi)&= \MP_{X_r}(0)\Vu_r^0(\Vxi).
\end{aligned}\right.
\label{eq:pertproj22}
\end{equation}
Note that for subspaces $X_r(t)$ generated by trajectories of the full-order model, $X_r(0)$ contains evaluations of the initial condition $\Vu_r^0(\Vxi)$, so that $\MP_{X_r}(0)\Vu_r^0 = \Vu_r^0$ in the case of a parameter-independent initial condition\footnote{Note that the dimension of $X_r(0)$ may be different from the dimension of  $X_r(t)$, $t>0$, which does not contradict the assumption \eqref{ass:dim}.}.
%\todo{with $\Vu_r^0(\Vxi)= \MP_{X_r}(0) \Vu^0(\Vxi)$ for parameter-dependent initial condition, or $\Vu_r^0=\Vu^0$ for parameter-independent initial condition.} 
We define
 $$\Vu_r(t,\Vxi) = \MV(t)\Valp(t,\Vxi),$$ 
with $\Valp(t,\Vxi)= \MV(t)^T\Vu_r(t,\Vxi)  \in \RR^r.$  
%We have
%$\MP'_{X_r} \Vu_r =(\MV{{\MV'}{}^{T}}+\MV'\MV^T)\MV\Valp= (\MI_d-  \MP_{X_r}) \MV' \Valp + \MV(\MV^T
%\MV)'\Valp =(\MI_d-  \MP_{X_r}) \MV'\Valp$. 
Let $ \MP_{X_r^\perp}(t) = \MI_d-  \MP_{X_r}(t)$ be the orthogonal projector onto $X_r^\perp(t)$ which is the orthogonal complement of $X_r(t)$ in $X$. 
From the first equation of \eqref{eq:pertproj22} and $\MP_{X_r^\perp}(t)\MV(t) = 0$, we deduce 
\begin{equation}
\Vu'_r(t,\Vxi)  =\MP_{X_r}(t)\Vf(\Vu_r(t,\Vxi) ,t,\Vxi)+\MP_{X_r^\perp}(t) \MV ' (t)\Valp(t,\Vxi).
\label{eq:pertproj3}
\end{equation}
A reduced dynamical system of dimension $r$ is then obtained for the components $\Valp$:
\begin{equation}\left\{
\begin{aligned}
\Valp'(t,\Vxi) &=  \Vf_r(\Valp(t,\Vxi),t,\Vxi),\\
\Valp(0,\Vxi) &= \Valp^0(\Vxi),
\end{aligned}\right.
\label{eq:pertproj2}
\end{equation} 
with $\Valp^0(\Vxi) = \MV^T(0)\Vu^0(\Vxi)$ and a reduced flux $\Vf_r$ defined by 
\begin{equation}
\label{eq:reddyn}
 \Vf_r(\Valp,t,\Vxi) = \MV(t)^T \Vf(\MV(t)\Valp ,t,\Vxi)-  {\MV(t)^T\MV'(t)\Valp},
\end{equation} 
where the last term takes into account the time dependency of the reduced basis. For a time-independent reduced basis, i.e. such that $\MV'(t)=0$, we recover a classical projected dynamical system (see, e.g., \cite{Haasdonk2011}).

\begin{remark}
Assuming that for a fixed $\Vxi$, $\Vv \mapsto \Vf(\Vv,t,\Vxi)$ is uniformly Lipschitz continuous, $(\Vv,t) \mapsto \Vf(\Vv,t,\Vxi)$ is continuous, and assuming that 
$\MV$ is continuously differentiable with $\MV'$ uniformly bounded, then $\Vv \mapsto \Vf_r(\Vv,t,\Vxi)$ is uniformly Lipschitz continuous with respect to $\Vv$ and $(\Vv,t) \mapsto \Vf_r(\Vv,t,\Vxi)$ is continuous. Then, the reduced dynamical system \eqref{eq:pertproj2} admits a unique solution $t \mapsto\Valp(t,\Vxi) $ which is continuously differentiable.
\end{remark} 

\subsection{A posteriori error estimate}\label{subsec:error}

From equations \eqref{eq:dynsystgen} and \eqref{eq:pertproj3}, 
we deduce that the error $\Ve_r= \Vu- \Vu_r$ satisfies 
\begin{equation}
\begin{array}{rcl}
 \Ve_r'(t,\Vxi)
&=&
 \underbrace{\Vf(\Vu(t,\Vxi),t,\Vxi) - \Vf(\Vu_r(t,\Vxi),t,\Vxi)}_{{  \boldsymbol{\delta}_1 }} 
 +\underbrace{\MP_{X_r^\perp}(t)  \Vf(\Vu_r(t,\Vxi),t,\Vxi)}_{{  \boldsymbol{\delta}_2 }},\\
 &-& \underbrace{\MP_{X_r^\perp}(t) \MV'(t) \MV(t)^T\Vu_r(t,\Vxi) }_{{  \boldsymbol{\delta}_3 }}  ,
 \end{array}
 \label{eq:SDE}
\end{equation}
with   $\Ve_r(0,\Vxi) =\MP_{X_r^\perp} (0) \Vu^0(\Vxi)$.
 The time derivative of the error $\Ve_r$ is the sum of three contributions: the error $ \boldsymbol{\delta}_1$ between the flux  
 and its approximation, the error $ \boldsymbol{\delta}_2$ between the flux approximation and its projection onto  $X_r(t)$, 
 and an additional term $ \boldsymbol{\delta}_3$ taking into account the time dependency of the basis.\\
 
We first recall the definition of the local logarithmic Lipchitz constant, as defined in \cite[\S 2.1]{Wirtz2014}. 
This constant will provide a local information on the flux $\Vf$ around the approximation $\Vu_r$ and 
 will allow to derive an a posteriori error estimate. 
\begin{definition}[Local logarithmic Lipschitz constant]\label{def:lipschitz}
 For a Lipschitz continuous function $\Vf : X \to X$,
 the local logarithmic Lipschitz constant of $\Vf$ at $\Vv \in X$ is defined by 
$$
L_X[\Vf](\Vv) = \sup_{ \Vu\in X, \Vu\neq\Vv } \dfrac{\langle \Vu-\Vv, \Vf(\Vu)-\Vf(\Vv)\rangle_X}
{\|\Vu-\Vv\|^2_X}.
$$
\end{definition}
%\todo{We first recall the definition of the logarithmic Lipchitz constant for a Lipschitz continuous function. 
%\begin{definition}[Logarithmic Lipschitz constant]  \label{def:lipschitz}
%For a  function $\Vf : \RR^d \to \RR^d$, the logarithmic Lipschitz constant with
%respect to $\|\cdot \|_X$ is defined by 
%$$
%L_X[\Vf] =  \lim_{h \to 0^+} \frac{1}{h}
%\left(
%\sup_{ \Vu,\Vv \in \RR^d,\Vu \neq\Vv  } \dfrac{\|\Vu-\Vv+ h (\Vf(\Vu)-\Vf(\Vv))\|_X}{\|\Vu-\Vv\|_X} - 1
%\right)
%$$
%\end{definition}
%
%The following alternative definition of logarithmic Lipschitz constant   is considered in practice (see \cite[\S 2.1]{Wirtz2014}). We also give a definition of  the {\em local} logarithmic Lipschitz constant, since local information on the flux around the current reduced state  is required in the proposed error estimate.
%\begin{lemma}
%Given a Lipschitz continuous function $\Vf : \RR^d \to \RR^d$, i.e. there exists a constant $K_X[\Vf]>0$ such that $\|\Vf(\Vu)-\Vf(\Vv)\|_X \le K_X[\Vf] \|\Vu - \Vv\|_X$, the logarithmic Lipschitz constant of $\Vf$ with respect to $\|\cdot\|_X$ reads
%$$
%L_X[\Vf] = \sup_{ \Vu,\Vv \in \RR^d,\Vu \neq\Vv} \dfrac{\langle \Vu-\Vv, \Vf(\Vu)-\Vf(\Vv)\rangle_X}
%{\|\Vu-\Vv\|^2_X}.
%$$
%and the local logarithmic Lipschitz  for $\Vv \in X$ is given by 
%$$
%L_X[\Vf](\Vv) = \sup_{ \Vu,\Vv\in \RR^d, \Vu\neq\Vv } \dfrac{\langle \Vu-\Vv, \Vf(\Vu)-\Vf(\Vv)\rangle_X}
%{\|\Vu-\Vv\|^2_X}.
%$$
%\end{lemma}}
For an affine flux, the local logarithmic Lipschitz constant is constant and can be obtained by solving an eigenvalue problem.
Indeed, if $\Vf(\Vu) = \MA \Vu + \Vg$ with $\MA \in  \RR^{d \times d}$ and $\Vg \in \RR^d$, 
then 
\begin{equation}\label{eq:liplin}
L_X[\Vf](\Vv) = L_X[\MA]= \sup_{0\neq \Vu \in X } \dfrac{\langle \Vu , \MA \Vu\rangle_X}{\|\Vu\|^2_X}
= \lambda_{max}\left(\frac{\MA+\MA^T}{2}\right),
\end{equation}
where $\lambda_{max}(\MM)$ denotes the maximum eigenvalue of the matrix $\MM \in \RR^{d \times d }$.\\

Now, we recall a comparison lemma \cite{Wirtz2014} in a form suitable for the derivation of our a posteriori error estimate. 
\begin{lemma}[Comparison lemma]\label{lem:comparison}
 Let $T>0$ and let $u,\alpha,\beta : [0,T] \to \RR$ be integrable functions. Assume that $u$ is differentiable
 and  $ u' \le \beta u  + \alpha$. Then for all $ t \in [0,T]$, it holds $u(t) \le v(t)$, where $v(t)$ is the solution of the differential equation $v' = \beta v + \alpha$ with initial  condition $v(0)=u(0)$.
%  $$
% u(t) \le \int_0^t \alpha(s) e^{\int_s^t \beta(\tau) d\tau}ds + u(0)e^{\int_0^t \beta(\tau) d\tau}.
% $$
\end{lemma}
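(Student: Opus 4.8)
The plan is to reduce the differential inequality to a homogeneous one by subtracting the reference solution, and then to conclude with the classical integrating-factor trick. First I would introduce $w := u - v$. Since $v$ solves $v' = \beta v + \alpha$ on $[0,T]$ with $v(0) = u(0)$ (existence and uniqueness of this solution being guaranteed by the variation-of-constants formula, as $\alpha$ and $\beta$ are integrable), and since $u' \le \beta u + \alpha$ by hypothesis, subtraction gives $w' = u' - v' \le \beta(u-v) = \beta w$ on $[0,T]$, together with the initial value $w(0) = u(0) - v(0) = 0$.

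Next I would introduce the positive integrating factor $\mu(t) := \exp\left(-\int_0^t \beta(s)\,ds\right)$, which is well defined and absolutely continuous because $\beta$ is integrable, satisfies $\mu'(t) = -\beta(t)\mu(t)$ for a.e.\ $t$, and obeys $\mu(t) > 0$ for all $t \in [0,T]$. Setting $\varphi(t) := \mu(t) w(t)$, one computes $\varphi'(t) = \mu'(t)w(t) + \mu(t)w'(t) = \mu(t)\bigl(w'(t) - \beta(t)w(t)\bigr) \le 0$, using $\mu(t) > 0$ and $w' \le \beta w$. Hence $\varphi$ is non-increasing on $[0,T]$. Finally, since $\varphi(0) = \mu(0)w(0) = 0$, monotonicity yields $\varphi(t) \le 0$ for all $t \in [0,T]$; dividing by $\mu(t) > 0$ gives $w(t) \le 0$, i.e.\ $u(t) \le v(t)$, which is the claim.

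The argument contains no genuine obstacle; the only point that deserves a little care is the regularity bookkeeping. Since the hypotheses only assume integrability of $\alpha$ and $\beta$, the function $\mu$ is merely absolutely continuous rather than $C^1$, so the identity $\varphi' = \mu(w' - \beta w)$ should be read as holding almost everywhere, and the final implication ``$\varphi' \le 0$ a.e.\ $\Rightarrow$ $\varphi$ non-increasing'' relies on $\varphi$ being absolutely continuous — which holds because $\varphi$ is the product of the absolutely continuous function $\mu$ and the differentiable function $w$ whose derivative $w' = u' - v'$ is integrable. Alternatively, one can sidestep these considerations by writing both $u$ and $v$ via the variation-of-constants formula with the common initial value $u(0)$ and observing that their difference equals $\int_0^t \exp\!\left(\int_s^t \beta\right)\bigl(u'(s) - \beta(s)u(s) - \alpha(s)\bigr)\,ds$, an integral of a nonpositive integrand against a positive kernel, hence $\le 0$.
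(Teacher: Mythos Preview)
Your proof is correct and essentially the same as the paper's: both use the integrating factor $e^{-\int_0^t \beta}$ to turn the differential inequality into a monotonicity statement. The only cosmetic difference is that the paper applies the integrating factor directly to $u$ (computing $(e^{-\gamma}u)' \le e^{-\gamma}\alpha$, integrating, and recognizing the result as the variation-of-constants formula for $v$), whereas you first subtract $v$ and then apply the factor to $w=u-v$; your closing ``alternative'' paragraph is in fact almost verbatim the paper's argument.
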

\begin{proof} 
Denoting $\gamma(t) = \int_0^t \beta(\tau) d\tau$, we have
$$
\begin{array}{rcl}
\displaystyle u(t) &=& e^{\gamma(t)}e^{-\gamma(t)}u(t) -e^{\gamma(t)}u(0) +e^{\gamma(t)}u(0),\\
&=& e^{\gamma(t)}\int_0^t (e^{-\gamma(\tau)}u(\tau))' d\tau +e^{\gamma(t)}v(0),\\
&\le & e^{\gamma(t)} \left( \int_0^t e^{-\gamma(\tau)}\alpha(\tau) d\tau +v(0)\right) = v(t),
\end{array}
$$ 
which ends the proof.
\end{proof}
Following \cite{Wirtz2014}, we now provide a bound for the error norm $\norm{\Ve_r(t,\Vxi)}_X$. 
%We omit the dependence on $\Vxi$  for the sake of clarity. 
\begin{proposition}\label{thm:complex}
The error norm $\|\Ve_r(t,\Vxi)\|_X$ satisfies 
\begin{equation}
\|\Ve_r(t,\Vxi)\|_X \le \Delta_r(t,\Vxi) \label{bound_DeltaNL}
\end{equation}
for all $t\ge 0$,
where $\Delta_r(t,\Vxi)$ is the solution of the ordinary differential equation
\begin{equation}
\label{eq:errorEDO}
\left\{\begin{aligned}
\Delta_r'(t,\Vxi) &=  L_X[\Vf](\Vu_r(t,\Vxi)) \Delta_r(t,\Vxi) + \|\Vr(t,\Vxi) \|_X, \\
\Delta_r(0,\Vxi) & = \| \Ve_r(0,\Vxi)\|_X,
\end{aligned}
\right.
\end{equation}
with $\Vr(t,\Vxi) = \MP_{X_r^\perp}(t)(\MV'(t) \MV(t)^T\Vu_r(t,\Vxi)- \Vf(\Vu_r(t,\Vxi),t,\Vxi)  ) $ and $\Ve_r(0,\Vxi) =\Vu^0(\Vxi) -\MP_{X_r}(0)\Vu^0(\Vxi) $.
%\begin{align*}
%&\beta(t) = L_X[\Vf](\Vu_r(t)),\\
%&\gamma(t) = \norm{\MQ_r(t)( \Vf(\Vu_r(t),t) -\MV'(t) \Valp(t) ) }_X,\\
%& \| \Ve(0)\|_X = \|\Vu^0 -\MP_{X_r}(0)\Vu^0\|_X.
%\end{align*}
%\begin{equation}
%\Delta(t) =  \int_0^t \gamma(s) e^{\int_s^t \beta(\tau) d\tau ds} + \norm{\Ve(0)}_X e^{\int_0^t \beta(\tau) d\tau ds},
%\end{equation}
%where 
\end{proposition}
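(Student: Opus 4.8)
The plan is to turn the vector error equation \eqref{eq:SDE} into a scalar differential inequality for $t\mapsto \|\Ve_r(t,\Vxi)\|_X$ and then to conclude with the comparison Lemma \ref{lem:comparison}. First I would simplify the right-hand side of \eqref{eq:SDE}: since $\boldsymbol{\delta}_2-\boldsymbol{\delta}_3 = \MP_{X_r^\perp}(t)\big(\Vf(\Vu_r(t,\Vxi),t,\Vxi)-\MV'(t)\MV(t)^T\Vu_r(t,\Vxi)\big)=-\Vr(t,\Vxi)$, the error satisfies $\Ve_r'(t,\Vxi)=\boldsymbol{\delta}_1(t,\Vxi)-\Vr(t,\Vxi)$ with $\boldsymbol{\delta}_1=\Vf(\Vu,t,\Vxi)-\Vf(\Vu_r,t,\Vxi)$.

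Next, for fixed $\Vxi$ and $t$, I would take the inner product of this identity with $\Ve_r=\Vu-\Vu_r$. For the first term, Definition \ref{def:lipschitz} applied at the base point $\Vv=\Vu_r(t,\Vxi)$ with $\Vu=\Vu(t,\Vxi)$ gives $\langle \Ve_r,\boldsymbol{\delta}_1\rangle_X\le L_X[\Vf](\Vu_r(t,\Vxi))\,\|\Ve_r\|_X^2$ (this also holds trivially when $\Ve_r=0$). For the second term, Cauchy--Schwarz yields $-\langle \Ve_r,\Vr\rangle_X\le \|\Ve_r\|_X\,\|\Vr\|_X$. Using $\tfrac12\tfrac{d}{dt}\|\Ve_r\|_X^2=\langle \Ve_r,\Ve_r'\rangle_X$ and dividing by $\|\Ve_r\|_X$ wherever it does not vanish, this produces exactly
\[
\frac{d}{dt}\|\Ve_r(t,\Vxi)\|_X \le L_X[\Vf](\Vu_r(t,\Vxi))\,\|\Ve_r(t,\Vxi)\|_X+\|\Vr(t,\Vxi)\|_X ,
\]
with $\|\Ve_r(0,\Vxi)\|_X = \|\Vu^0(\Vxi)-\MP_{X_r}(0)\Vu^0(\Vxi)\|_X$. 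Comparing with \eqref{eq:errorEDO} via Lemma \ref{lem:comparison} (taking $u=\|\Ve_r\|_X$, $\beta=L_X[\Vf](\Vu_r)$, $\alpha=\|\Vr\|_X$) then gives \eqref{bound_DeltaNL}.

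The point requiring care --- and the only real obstacle --- is that $t\mapsto\|\Ve_r(t,\Vxi)\|_X$ need not be differentiable at instants where $\Ve_r$ vanishes, whereas Lemma \ref{lem:comparison} is stated for a differentiable $u$. I would circumvent this in the usual way, by working with the smooth surrogate $\phi_\delta(t)=\sqrt{\|\Ve_r(t,\Vxi)\|_X^2+\delta^2}$ for $\delta>0$: the chain rule gives $\phi_\delta\phi_\delta'=\langle\Ve_r,\Ve_r'\rangle_X\le L_X[\Vf](\Vu_r)\phi_\delta^2+\phi_\delta\|\Vr\|_X$, hence $\phi_\delta'\le L_X[\Vf](\Vu_r)\phi_\delta+\|\Vr\|_X$ on all of $[0,T]$; Lemma \ref{lem:comparison} then bounds $\phi_\delta$ by the solution of the linear ODE with initial value $\phi_\delta(0)$, and letting $\delta\to0$ (continuous dependence of the linear ODE on its initial datum) yields $\|\Ve_r(t,\Vxi)\|_X\le\Delta_r(t,\Vxi)$. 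Alternatively, one may note that $t\mapsto\|\Ve_r(t,\Vxi)\|_X$ is absolutely continuous and that the proof of Lemma \ref{lem:comparison} uses only integrability of $u'$, so it applies verbatim. A minor side point is that $L_X[\Vf](\Vu_r(\cdot,\Vxi))$ is bounded by the global Lipschitz constant of $\Vf$ and measurable in $t$, so the hypotheses of the comparison lemma are indeed met.
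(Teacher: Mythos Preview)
Your proof is correct and follows essentially the same route as the paper: take the inner product of \eqref{eq:SDE} with $\Ve_r$, bound $\langle\Ve_r,\boldsymbol{\delta}_1\rangle_X$ via the logarithmic Lipschitz constant and $\langle\Ve_r,-\Vr\rangle_X$ by Cauchy--Schwarz, derive the scalar differential inequality, and apply Lemma~\ref{lem:comparison}. Your treatment is in fact more careful than the paper's, which simply divides by $\|\Ve_r\|_X$ without comment; your $\phi_\delta$ regularization (or the absolute-continuity remark) fills a gap the paper leaves implicit.
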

\begin{proof} For the sake of clarity, we omit the dependence on $\Vxi$ in the proof.
Taking the scalar product of \eqref{eq:SDE} with $\Ve_r(t)$, it comes
\begin{align*}
 \frac{1}{2} \frac{d}{dt}\|\Ve_r(t)\|^2_X &= \langle\Ve_r(t),\Ve_r'(t)\rangle_X,\\
&= \langle \Ve_r(t),(\Vf(\Vu(t),t)-\Vf(\Vu_r(t),t))+\Vr(t) \rangle_X,\\ 
&\le L_X[\Vf](\Vu_r(t))\|\Ve_r(t)\|^2_X + \|\Ve_r(t)\|_X\|\Vr(t)\|_X,
\end{align*}
where we have used the definition \ref{def:lipschitz} of the local logarithmic Lipschitz constant at $\Vu_r(t)$. We then obtain  
$ \frac{d}{dt}\|\Ve_r(t)\|_X \le L_X[\Vf](\Vu_r(t))\|\Ve_r(t)\|_X + \|\Vr(t)\|_X$, and the result follows from   Lemma \ref{lem:comparison}.
\end{proof}
In practice, the error bound $\Delta_r(t)$ will be estimated by solving approximately 
the differential  equation \eqref{eq:errorEDO} using a numerical scheme (see Lemma \ref{sec:practical_aspects}).

\begin{remark}
Following the proof of Theorem \ref{thm:complex}, we easily prove that the solution of \eqref{eq:errorEDO}, with $L_X[\Vf](\Vu_r)$ replaced by the local Lipschitz constant 
$$
K_X[\Vf](\Vu_r) = \sup_{\Vv \in X,\Vv\neq \Vu_r} \frac{\Vert \Vf(\Vv) - \Vf(\Vu_r) \Vert_X}{\Vert \Vv - \Vu_r \Vert_X},
$$ 
also provides an error bound. However, since $L_X[\Vf]\le K_X[\Vf]$, a sharper error bound is obtained when using 
 the local logarithmic Lipschitz constant. Moreover, $L_X[\Vf](\Vu_r)$ is easier to compute than $K_X[\Vf](\Vu_r)$ in practice.
\end{remark}

{  
\begin{remark}
The error bound $\Delta_r(t)$ may grow exponentially with time. However, it is a certified error bound and a good candidate for piloting adaptive algorithms (see later). Moreover, if particular information is available on the structure of the problem (e.g. if it arises from spatial discretization of a PDE), it is possible to improve this error bound using energy norms \cite[\S 3.3]{Haasdonk2014}. In the context of Petrov-Galerkin space-time formulations, better error estimates can be obtained for particular classes of problems, e.g. \cite{Urban2012} for parabolic equation and \cite{Yano2014} for Burgers' equation. For the general non linear dynamical systems considered here, a way to improve the effectivity of  error bound $\Delta_r$ is to improve the local Lipschitz constant, as suggested in \cite[\S 5]{Wirtz2014}.
\end{remark}
 }

%%%%%%%%%%%%%%%%%%%%%%%%%%%%%%%%%%%%%%%%%%
%%%%%%%%%%%%%%%%%%%%%%%%%%%%%%%%%%%%%%%%%%
%%%%%%%%% Greedy algorithms %%%%%%%%%%%%%%%%%
%%%%%%%%%%%%%%%%%%%%%%%%%%%%%%%%%%%%%%%%%%
%%%%%%%%%%%%%%%%%%%%%%%%%%%%%%%%%%%%%%%%%%

\section{Greedy algorithms for the construction of reduced spaces}\label{sec:lowrank}

In this section, we introduce greedy algorithms for the construction of an increasing sequence of reduced spaces $\{X_r\}_{r>0}$. These spaces are generated from successive evaluations of the solution $\Vu$ of the full-order model at parameters values $\{\Vxi^r\}_{r>0}$ which are selected adaptively.  

%First, in Lemma \ref{sec:timeindep}, we consider the construction of time-independent reduced spaces with a particular emphasis on the so-called POD-greedy algorithm. Then, in Lemma \ref{sec:timedep}, we introduce strategies for constructing time-dependent spaces.
%
%\subsection{Preliminaries}
For a given subspace $X_r$ in $X$, possibly time-dependent, we consider $\Vu_r(\cdot,\Vxi)$ 
the solution of  \eqref{eq:pertproj22}.
We assume that an a posteriori estimate 
$\Delta_r(t,\Vxi)$ of the error $\| \Ve_r(t,\Vxi)\|_X$ is available (see Subsection \ref{subsec:error}).
Defining 
$$
X_{r+1}(t) =  X_r(t) + \mathrm{span} \{\Vu(t,\Vxi^{r+1}(t))\},
$$
 where $\Vxi^{r+1}(t)$ maximizes $\Vxi \mapsto \Delta_r(t,\Vxi)$ over $\VXi$, would result in a standard greedy algorithm for the approximation of the solution manifold $\{\Vu(t,\Vxi):\Vxi\in \VXi\}$ at time $t$ with a sequence of subspaces $\{X_r(t)\}_{r>0}$ (see \cite{Buffa2012,Binev2011,DeVore:2013fk} for convergence results on greedy algorithms). However, choosing time-dependent parameters values is infeasible in practice, even when working with a discretization on a time grid. Indeed, it requires the solution of the full-order model for too many parameters values.

Therefore, we rely on evaluations of the solution at time-independent parameters values whose selection requires a global in time error estimate. 
Here, we introduce an  a posteriori estimate of the $L^2(0,T)$-norm of the error $\| \Ve_r(\cdot,\Vxi) \|_X$ defined by
$$
\Delta_r^{(0,T)}(\Vxi) := \Vert \Delta_r(\cdot, \Vxi) \Vert_{(0,T),2} = \left(\int_{0}^T \Delta_r(t, \Vxi)^2dt\right)^{1/2},
$$ 
where $\Vert \cdot \Vert_{(0,T),2}$ denotes the natural norm in $L^2(0,T)$.
 \begin{remark}
Note that other norms of $t \mapsto \Delta_r(t, \Vxi)$ (e.g., the $L^\infty(0,T)$-norm or a weighted $L^2(0,T)$-norm) could be used for defining the error indicator 
 $\Delta_r^{(0,T)}(\Vxi)$, depending on how we want to control the quality of the reduced-order model. 
 \end{remark}

\subsection{T-greedy algorithm} \label{sec:timedep}

We first propose a very natural strategy which consists in defining  
$$
X_{r+1}(t) =  X_r(t) + \mathrm{span} \{\Vu(t,\Vxi^{r+1})\}. 
$$
A basic strategy would consist in choosing the sequence of parameters values 
$\{\Vxi^1,\dots,\Vxi^r,\hdots\}$ at random. 
 In this work, we adopt an adaptive greedy strategy  called  {\it T-greedy} algorithm, 
where $\Vxi^{r+1}$ maximizes over $\VXi$ the error estimate $\Delta_r^{(0,T)}(\Vxi)$.  
Note that in practice, parameters values are selected  in a finite subset $\VXi_{train}$ in $\VXi$ called a \emph{training set}. 
The T-Greedy algorithm is summarized in Algorithm \ref{alg:tgreedy}.

\begin{algorithm}
\caption{T-Greedy algorithm} \label{alg:tgreedy}
\begin{algorithmic}[1]
\STATE Set $r=0$, $X_0=\{0\}$.
  \STATE Compute $\Vu_{r}(\cdot,\Vxi)$ and $\Delta_{r}(\cdot,\Vxi)$ for all $\Vxi$ in $\VXi_{train}$.\label{computeurTgreedy}
  \STATE Find $ \displaystyle \Vxi^{r+1} \in \arg\max_{\Vxi \in \VXi_{train}}\Delta_r^{(0,T)}(\Vxi) $.\label{choosepoint}
  \STATE  Compute $t \mapsto \Vu(t,\Vxi^{r+1})$. 
  \STATE Set $X_{r+1}(t) = X_{r}(t) + \spann\{\Vu(t,\Vxi^{r+1})\}$.
  \STATE Set $r \leftarrow r+1$ and go to step \ref{computeurTgreedy}.
\end{algorithmic}
\end{algorithm}
In practice, we can fix a desired precision $\varepsilon$ and  stop the 
Algorithm \ref{alg:tgreedy} after step \ref {computeurTgreedy} if $\max_{\Vxi \in \VXi_{train}}\Delta_r^{(0,T)}(\Vxi) < \varepsilon$.
At iteration $r$ of the algorithm, 
the approximation $\Vu_r(t)$ takes the form 
\begin{align}
\Vu_r(t,\Vxi) =  \sum_{i=1}^r \Vv_i(t) \alpha_i(t,\Vxi) ,
\end{align}   
where $\{\Vv_1(t),\hdots,\Vv_r(t)\}$ constitutes a basis of the space $X_r(t)$. 
The time-dependent subspace $X_r(t)$ contains the solution $\Vu(t,\Vxi)$ of the full-order model for parameters values $\Vxi^1,\hdots,\Vxi^r$. Therefore, by the property of the Galerkin projection, the solution $\Vu_r$ of the reduced dynamical system interpolates the solution $\Vu$ at these parameters values.  \\

Satisfying assumption \eqref{ass:dim} requires that for all $t > 0$, the vectors $\Vu(t,\Vxi^{1}),\hdots,\Vu(t,\Vxi^r)$ are linearly independent. Under this assumption, we can define the basis of 
 $X_r(t)$ by  
\begin{equation}\label{eq:v}
\Vv_i(t) = \Vu(t,\Vxi^{i}) \quad \text{ or } \quad \Vv_{i}(t) =\dfrac{ (\MI_d-\MP_{X_{i-1}}(t))\Vu(t,\Vxi^{i})}{\norm{ (\MI_d-\MP_{X_{i-1}}(t))\Vu(t,\Vxi^{i})}_X},
\end{equation}
the second choice resulting in an orthonormal basis which is more convenient for numerical stability issues. 
Moreover, assuming that the flux $\Vf(\cdot,\cdot,\Vxi^i)$ is  continuous and uniformly Lipschitz continuous 
with respect to its first variable, the trajectories $t\mapsto \Vu(t,\Vxi^i)$, and therefore the functions $t\mapsto \Vv_i(t)$, are continuously differentiable.

\subsection{POD-Greedy algorithm} \label{sec:timeindep}

Here, we describe the POD-greedy algorithm introduced in  \cite{Haasdonk2008} for the adaptive construction of time-independent reduced spaces $X_r$. Given an approximation space $X_r$ and the corresponding solution $\Vu_r$ of the reduced dynamical system in $X_r$, we select a new parameters value
$\Vxi^{r+1}$ which maximizes   the 
 error estimate $\Vxi \mapsto \Delta_r^{(0,T)}(\Vxi)$, such as for the T-greedy strategy.
Then the trajectory $\Vu(\cdot,\Vxi^{r+1})$ of the full-order model is computed and
the space $X_r$ is enriched by the $\ell$-dimensional subspace generated by the first $\ell$ POD modes $POD_{\ell}(\Vs_r(\cdot,\Vxi^{r+1}))$ of $\Vs_r(\cdot,\Vxi^{r+1}) = \Vu(\cdot,\Vxi^{r+1}) - \MP_{X_r} \Vu(\cdot,\Vxi^{r+1})$, such that (see, e.g., \cite{Volkwein2008})
$$
\mathrm{span} (POD_{\ell}(\Vs_r(\cdot,\Vxi^{r+1}))) = \arg\min_{\dim(V)=\ell}
\int_{0}^T  \| \Vs_r(t,\Vxi^{r+1})- \MP_{V} \Vs_r(t,\Vxi^{r+1})\|_X^2 dt.
$$ 
The algorithm is summarized in Algorithm \ref{alg:PODG}.

\begin{algorithm}\caption{POD-greedy algorithm \label{alg:PODG}}
\begin{algorithmic}[1]
%\REQUIRE $\varepsilon_{tol}>0$ and $r_{max} \in \NN^*$.
\STATE Set $r=0$, $X_0=\{0\}$.
%\WHILE{$\displaystyle \max_{\Vxi \in \VXi_{train}} \Delta_r^{(0,T)}(\Vxi) > \varepsilon_{tol}$
% and $r<r_{max}$}
%\WHILE{$r<r_{max}$}
\STATE Compute $\Vu_r(\cdot,\Vxi)$ and $\Delta_{r}(\cdot,\Vxi)$ for all $\Vxi$ in $\VXi_{train}$.\label{computeurPODgreedy}
\STATE Find $  \Vxi^{r+1} \in \arg\max_{\Vxi \in \VXi_{train}} \Delta_r^{(0,T)}(\Vxi)$. \label{chooseparamPOD}
\STATE Compute $\Vu(\cdot,\Vxi^{r+1})$ and $\Vs_r(\cdot, \Vxi^{r+1}) = \Vu(\cdot,\Vxi^{r+1})-\MP_{X_r}\Vu(\cdot,\Vxi^{r+1})$.
\STATE Set  $X_{r+\ell} =  X_{r} + \mathrm{span}(POD_{\ell}\left(\Vs_r(\cdot, \Vxi^{r+1})\right))$.
\STATE Set $r \leftarrow r + \ell$ and go to step \ref{computeurPODgreedy}.
%\ENDWHILE
\end{algorithmic}
\end{algorithm}
As for Algorithm \ref{alg:tgreedy}, we can fix a desired precision $\varepsilon$ and  stop the Algorithm \ref{alg:PODG}
 after step \ref{computeurPODgreedy} if $\arg\max_{\Vxi \in \VXi_{train}}\Delta_r^{(0,T)}(\Vxi) < \varepsilon$.
%\begin{remark}
%If the error estimate is such that 
%%
%%Efficient error estimator $\Delta_r$ have been derived for linear dynamical system \cite{Haasdonk2011} and then for nonlinear systems \cite{Wirtz2014}.  Moreover, if  
%\begin{equation}
%c_r \Delta_r(t,\cdot) \le \| \Vu(t,\cdot) - \Vu_r(t,\cdot)\|_X \le C_r \Delta_r(t,\cdot),% \quad t>0,
%\label{eq:surrogate}
%\end{equation}
%with $0<c_r\le C_r<\infty$ some positive constants, the POD-greedy algorithm has been proved to converge with quasi-optimal rates \cite{Haasdonk2013}.
%\end{remark}
Let us remark that choosing $\ell=1$ allows to obtain a slow increase of the dimension of $X_r$ along the iterations.  
Since the reduced space is enriched by only the first modes of 
the POD of the trajectories $t \mapsto \Vu(t,\Vxi^i)$, $1\le i \le r$, the approximation $\Vu_r$ does not in general  interpolate the function $\Vxi \mapsto \Vu(\cdot,\Vxi)$ at points  $\Vxi^1,\hdots,\Vxi^r$. As it will be illustrated in the numerical experiments, this enrichment strategy may yield very high-dimensional reduced spaces for reaching a desired accuracy. 
%Moreover, POD-greedy procedure could generate high dimensional basis when
%the spectral content of the solution  trajectory is rich. 
A typical example is the advection problem  $\partial_t u + \xi \partial_x u =0$ on the torus with initial condition $u^0$, for which a very high-dimensional time-independent reduced space may be required to approximate the solution
$t \mapsto u^0(x-\xi t)$, even for one instance of the parameters $\Vxi$. More generally, it is well known that  POD is not well suited for solving problems with propagating fronts.

%%%%%%%%%%%%%%%%%%%%%%%%%%%%%%%%%%%%%%%%%%
%%%%%%%%%%%%%%%%%%%%%%%%%%%%%%%%%%%%%%%%%%
%%%%%%%%% Practical aspects %%%%%%%%%%%%%%%%%
%%%%%%%%%%%%%%%%%%%%%%%%%%%%%%%%%%%%%%%%%%
%%%%%%%%%%%%%%%%%%%%%%%%%%%%%%%%%%%%%%%%%%

\section{Practical implementation}\label{sec:practical_aspects}

In this section, we provide the practical ingredients for an efficient offline/online implementation of the proposed model reduction method in a discrete time setting. We first introduce a time integration scheme 
for the solution of the full and reduced order dynamical systems, and for the computation of the error estimate. Then, for the model order reduction method to be efficient, the computation of the approximation $\Vu_r$ as well as the evaluation of the error estimate $\Delta_r$ have to be performed online with a complexity independent of the dimension $d$ of the full-order model. This requires some assumptions on the dependence of the flux on the parameters and some pre-computations in an offline phase.

%For linear dynamical systems, with a flux $\Vf()$ 
% an offline/online strategy. The offline stage 
%For model reduction to make sense 
%The solution of the reduced system \eqref{eq:discproj2} should br as well as the computation of the error estimators provided by Theorem \ref{thm:complex} must be done efficiently with a complexity depending only 
%on $r$. When dealing with linear dynamical systems, in particular with fluxes that admits affine-linear decomposition 
%\eqref{eq:afflin}, it is possible to proceed in {\it online/offline} fashion as usually done in the context of RB. 

%
%
%When solving nonlinear dynamical system, we have to address the task of computing efficiently the nonlinear flux  $\Vf$
%and the logarithmic Lipschitz constants. To this goal, we introduce additional approximation 
%of $\Vh$ and $L_X[\Vf]$ using interpolation methods.
%

\subsection{Time integration}

Let $\mathbb{T}=\{t_k\}_{k=0}^K$ be a regular discretization of $[0,T]$ with $t_k = k\delta t$ and $\delta t = \frac{T}{K}$. Given $a : [0,T] \to V$, with $V$ a vector space, we denote by 
$a^k\approx a(t^k)$ an approximation of $a$ at time $t^{k}$ and $\delta a^k = a^{k+1} - a^{k}$.
We assume that the flux $\Vf : X\times (0,T) \times \VXi  \to X$ can be decomposed as follows:
$$
 \Vf(\Vu,t,\Vxi)= \MA(t,\Vxi) \Vu + \Vh(\Vu,t,\Vxi) + \Vg(t,\Vxi),
$$
where $ \MA(t,\Vxi)  \in \RR^{d \times d}$, $\Vh(\cdot,t,\Vxi) :X \to \RR^d$ and $\Vg(t,\Vxi)\in \RR^d$. 
For solving \eqref{eq:dynsystgen}, 
we use the following semi-implicit time integration scheme  
\begin{equation}
\label{eq:discretdyn}
(\MI_d- \delta t\MA^{k+1}(\Vxi))\Vu^{k+1}(\Vxi) = \Vu^k(\Vxi)+ \delta t \Vh^k(\Vu^k(\Vxi),\Vxi) + \delta t \Vg^k(\Vxi),\\
\end{equation}
%where 
%$$
% \Vf_E^k(\Vu^k(\Vxi),\Vxi)=\Vh^k(\Vu^k(\Vxi),\Vxi)+  \Vg^k(\Vxi).
%$$
%The time step $\delta t$ is chosen in order to satisfy the Courant Friedrichs Levy (CFL) condition, which  ensures  numerical stability.
The matrix  $(\MI_d- \delta t\MA^{k+1}(\Vxi))$ is assumed to be invertible.  

\subsubsection{Time integration of the reduced dynamical system}

Let $X_r^k$ denote the reduced space at time $t^k$, $\{\Vv_1^k,\dots,\Vv_r^k\}$ an orthonormal basis of $X_r^k$, $\MV^k = [\Vv_1^k,\dots,\Vv_r^k]$, $\MP_{X_r^k} = {\MV^k}^T\MV^k$ the orthogonal projector  onto $X_r^k$, and $ \MP_{{X_r^k}^\perp}=\MI_d- \MP_{X_r^k}$. The  approximation $\Vu_r^k$ at time $t^k$  is obtained by projecting the discrete dynamical system \eqref{eq:discretdyn} onto $X_r^{k+1}$ 
\begin{equation}
\label{eq:discproj}
\Vu_r^{k+1}(\Vxi)  = \MP_{X_r^{k+1}} \left( \Vu_r^{k}(\Vxi) + \delta t (\MA^{k+1}(\Vxi) \Vu_r^{k+1}(\Vxi) +  \Vh^k(\Vu_r^k(\Vxi),\Vxi) + \Vg^k(\Vxi))\right), 
\end{equation}
with 
$\Vu_r^0(\Vxi) = \MP_{X_r^{0}} \Vu^0(\Vxi)$. 
%Noting that  $\MP_{{X_r^{k+1}}^\perp}\Vu_r^k(\Vxi)= -\MP_{{X_r^{k+1}}^\perp}\delta \MV^k \Valp^k(\Vxi)$, 
% \eqref{eq:discproj} can be rewritten 
%\begin{align*}
%& (\MI_d- \delta t\MA^{k+1}(\Vxi))\Vu_r^{k+1}(\Vxi)= \Vu_r^k(\Vxi) + \delta t  \Vh^k(\Vu_r^k(\Vxi),\Vxi) + \delta t \Vg^k(\Vxi)
%\\
%\quad &- \delta t \MP_{{X_r^{k+1}}^\perp} \left(\MA^{k+1}(\Vxi) \Vu_r^{k+1}(\Vxi) + \Vh^k(\Vu_r^k(\Vxi),\Vxi) + \Vg^k(\Vxi)-\frac{\delta \MV^k}{\delta t } \Valp^k(\Vxi) \right).
%\end{align*}
%Noting that ${\MV^{k+1}}^T \Vu_r^k =  \Valp^{k}(\Vxi) - \delta \MV^k \Valp^{k}(\Vxi)$, 
Then, we obtain the following discrete reduced dynamical system:
\begin{align}
\label{eq:discretereduced}
\left(\MI_r - \delta t \MA_{r}^{k+1}(\Vxi)\right)\Valp^{k+1}(\Vxi) &= \Valp^{k}(\Vxi) +\\
\delta t {\MV^{k+1}}^T &\left(   \Vh^{k}(\MV^k\Valp^k(\Vxi),\Vxi) +  \Vg^k(\Vxi) - \frac{\delta \MV^{k}}{\delta t} \Valp^k(\Vxi)\right),  \nonumber
% {\MV^{k+1}}^T \left(\MV^{k} \Valp^k(\Vxi) + \delta t \Vh^{k}(\MV^k\Valp^k(\Vxi),\Vxi) + \delta t \Vg^k(\Vxi)\right),  
\end{align}
with initial condition $\Valp^0(\Vxi) = {\MV^0}^T\Vu^0(\Vxi)$, where
\begin{equation}
\label{eq:redA}
 \MA_{r}^{k+1}(\Vxi)  = {\MV^{k+1}}^T \MA^{k+1}(\Vxi)\MV^{k+1},
\end{equation}
and where $\frac{\delta \MV^{k}}{\delta t}$ takes into account the possible time dependency of the reduced basis (this term is equal to zero for time-independent reduced spaces). 
% and the reduced flux
% \begin{equation}
%\label{eq:redF}
% \Vf_{E,r}^{k+1,k}(\Vxi)= {\MV^{k+1}}^T\Vf_{E}^{k}(\Vxi) = {\MV^{k+1}}^T\Vh^k(\Vu^k(\Vxi),\Vxi)+  {\MV^{k+1}}^T\Vg^k(\Vxi).
% \end{equation}
In practice, for an efficient solution of \eqref{eq:discretereduced}, $\Vh^{k}(\Vu^k(\Vxi),\Vxi)$ is replaced by an approximation $\tilde\Vh^{k}(\Vu^k(\Vxi),\Vxi)$ using an empirical interpolation method (see  Subsection \ref{subsub:reducflux}).  

\subsubsection{Time integration for error estimation}

We first note that if in \eqref{eq:errorEDO} the constant $L_X[\Vf](\Vu_r)$ is replaced by an upper bound, then  the solution of the ordinary differential equation provides an upper bound for $\Delta_r$.  Noting that $L_X[\Vf](\Vu_r) \le L_X[\MA] + L_X[\Vh](\Vu_r)$, we introduce estimations $\tilde L_X[\MA] $ and $\tilde L_X[\Vh](\Vu_r)$ of $ L_X[\MA] $ and $ L_X[\Vh](\Vu_r)$ respectively and consider the following ordinary differential equation whose solution $\tilde \Delta_r$ provides an estimation of an upper bound of $\Delta_r$:
\begin{equation}
\label{eq:errorEDO_bound}
\left\{\begin{aligned}
\tilde \Delta_r'(t,\Vxi) &= \tilde L_X[\MA]\tilde \Delta_r(t,\Vxi) + \tilde L_X[\Vh](\Vu_r)\tilde \Delta_r(t,\Vxi) + \|\tilde \Vr(t,\Vxi) \|_X, \\
\tilde \Delta_r(0,\Vxi) & = \| \Ve_r(0,\Vxi)\|_X.
\end{aligned}
\right.
\end{equation}
Here,  $\|\tilde \Vr(t,\Vxi) \|_X$ denotes an approximation of $\|\Vr(t,\Vxi) \|_X$ where $\Vh(\Vu_r(t,\Vxi),t,\Vxi)$ is replaced by an approximation 
$\tilde \Vh(\Vu_r(t,\Vxi),t,\Vxi)$  obtained by an empirical interpolation  method (see Subsection \ref{subsub:reducflux}).

Equation \eqref{eq:errorEDO_bound} is then solved using the following semi-implicit scheme 
(consistent with \eqref{eq:discretdyn}):
\begin{align}
\tilde \Delta^{k+1}_r(\Vxi)  =  (1- \delta t \tilde L_X[\MA^{k+1}(\Vxi)])^{-1} \Big( \tilde \Delta^k_r(\Vxi) + 
\delta t \tilde L_X[\Vh^k](\Vu_r^k(\Vxi))  + \delta t \Vert{\tilde \Vr^{k}(\Vxi)} \Vert_X \Big).  \label{eq:estimate2} 
\end{align}
with
\begin{equation}
\tilde \Vr^{k}(\Vxi) = \MP_{{X_r^{k+1}}^\perp}\left(\frac{\delta\MV^k}{\delta t} \Valp^k(\Vxi)- \MA^{k+1}(\Vxi) \Vu_r^{k+1}-\tilde\Vh^k(\Vu_r^k(\Vxi),\Vxi) - \Vg^k(\Vxi)  \right) ,
\label{tilder_discrete}
\end{equation}
where $\frac{\delta \MV^{k}}{\delta t}$ takes into account the possible time dependency of the reduced basis (this term is equal to zero for time-independent reduced spaces).

\subsection{Offline/Online implementation}

A time and parameter-dependent function $a(t,\Vxi)$ with values in some vector space $V$ is said to admit a time-dependent affine representation if 
$$
a(t,\Vxi) = \sum_{i=1}^{Q_{\MA}} \theta^i_a(t,\Vxi) a^i(t),
$$
with $\theta^a_i(t,\Vxi) \in \RR$ and $a^i(t)\in V$. For a function discretized on a time grid $\{t^k\}_{k=0}^K$, that means 
 $
a^k(\Vxi) = \sum_{i=1}^{Q_{\MA}} \theta^{i,k}_a(\Vxi) a^{i,k},
$ for $0 \le k \le K$.
For the sake of presentation, we keep the notation $a(t,\Vxi)$ for time-dependent functions, even if $t$ only takes a finite set of values $t^k$, $0\le k \le K$. 
Then, we assume that 
$\MA(t,\Vxi)$ and $\Vg(t,\Vxi)$ admit time-dependent affine representations
$$
 \MA(t,\Vxi)=  \sum_{i=1}^{Q_\MA} \theta^i_\MA(t,\Vxi) \MA^i(t), \quad 
 \Vg(t,\Vxi) =  \sum_{i=1}^{Q_{\Vg}} \theta^i_{\Vg}(t,\Vxi)  \Vg^i(t).
%\Valp(0)  =\sum_{i=1}^{Q_{\Vu^0}} \theta^{\Vu^0}_i(t,\Vxi)   \boldsymbol{\alpha}_{i,r}(0,\Vxi) .
$$

\subsubsection{Solution of the reduced dynamical system}\label{subsub:reducflux}

The solution of the reduced dynamical system \eqref{eq:discretereduced}  requires an efficient evaluation of
$$
 \MA_r(t^k,\Vxi) =  \MV(t^k)^T \MA(t^k,\Vxi) \MV(t^k),\quad \Vg_r(t^k,\Vxi) =  \MV(t^k)^T\Vg(t^k,\Vxi), $$
 and 
 $$
  %\Vh_r(\Valp,t,\Vxi) = 
\Vh^{k+1}_r(\Valp(t^k,\Vxi),t^k,\Vxi)=  \MV(t^{k+1})^T \Vh(\Valp(t^k,\Vxi),t^k,\Vxi).
$$
%If $\MA$ and $\Vg$ admit time-dependent affine representations, then the reduced quantities $\MA_r$ and $\Vg_r$ admit time-dependent affine representations with the same time and parameter-dependent functions $\theta^{i}_{A}(t,\Vxi) $ and $\theta^i_g(t,\Vxi)$, and 
%$$
% \MA_r^i(t) =  \MV(t)^T\MA^i\MV(t), \quad
% \Vg_r^i(t) =  \MV(t)^T\Vg_r^i(t).
%% \boldsymbol{\alpha}_{r}^{0,i}(\Vxi) &=& \MV^T(0)\Vu^{0,i}.
%$$
In the offline phase, after computing the reduced space $X_r(t)$ and  associated basis $\MV(t)$, the reduced matrices $\MA_r^i(t)= \MV(t)^T\MA^i(t)\MV(t)$ and reduced vectors $\Vg_r^i(t)=  \MV(t)^T\Vg_r^i(t)$ can be precomputed. 
Then in the online phase, the reduced matrix $\MA_r(t,\Vxi)$ and reduced vector  $\Vg_r(t,\Vxi)$ can be evaluated for any parameters value and any time  
with a complexity independent of $d$ using
$$
 \MA_r(t,\Vxi)=  \sum_{i=1}^{Q_\MA} \theta^i_\MA(t,\Vxi) \MA_r^i(t), \quad 
 \Vg_r(t) =  \sum_{i=1}^{Q_{\Vg}} \theta^i_{\Vg}(t,\Vxi)  \Vg_r^i(t).
%\Valp(0)  =\sum_{i=1}^{Q_{\Vu^0}} \theta^{\Vu^0}_i(t,\Vxi)   \boldsymbol{\alpha}_{i,r}(0,\Vxi) .
$$
An efficient way of treating the nonlinear term is to approximate  
 $\Vh(\Vu_r(t,\Vxi),t,\Vxi)$ using the Empirical Interpolation Method (EIM).  Here, we briefly recall the  principle of this method (see \cite{Bebendorf2014}  for a detailed presentation). 
For a given basis $\{\Vh_1,\hdots, \Vh_m\}$ in $\RR^d$, with $m\le d$,
the EIM  approximation $\tilde \Vh(\Vu_r(t,\Vxi),t,\Vxi)$ of order $m$ of $\Vh(\Vu_r(t,\Vxi),t,\Vxi)$ is 
given by
 \begin{equation}
 \label{eq:interph}
\tilde \Vh(\Vu_r(t,\Vxi),t,\Vxi)
 = \MHm ( \MPm^T \MHm)^{-1} \MPm^T\Vh(\Vu_r(t,\Vxi),t,\Vxi),
% &=&\mathbf{U}_m \MPm^T  \Vh(\Vu_r(t,\Vxi),t,\Vxi)  , 
 % &=&  \MHm(t)( \MPm(t)^T \MHm(t))^{-1}  \Vh( \MPm(t)^T \MV(t)\Valp(t,\Vxi)) 
\end{equation}
where  $\MPm = [\Ve_{j_1},\dots, \Ve_{j_m}] \in \RR^{m\times d }$, with $\Ve_j$ the $j$-th unit vector in $\RR^d$, and  
$\MHm = [\Vh_1,\dots, \Vh_m] \in \RR^{d\times m }$.
Here, the vectors ${\Vh}_1,\hdots,{\Vh}_m$  are chosen as linear combinations of columns $(i_1,\hdots,i_m)$ of the matrix of snapshots $\Vh(\Vu(t^k,\Vxi^i),t^k,\Vxi^i)$ already computed during the greedy selection of  $\{\Vxi^1,\dots\Vxi^r\}$. The set of indices $\mathcal{I}_m = \{(i_l,j_l), 1\le l \le m\}$ is obtained with a greedy algorithm (see \cite[\S 4.4]{Bebendorf2014}) which ensures that the matrix $\MPm^T \MHm$  is non singular and well-conditioned. An approximation of the reduced flux $\MV(t^{k+1})^T\Vh(\Vu_r(t^k,\Vxi),t^k,\Vxi)$ is then given by
$$
\MV(t^{k+1})^T \mathbf{U}_m \MPm^T \Vh(\Vu_r(t^k,\Vxi),t^k,\Vxi),
$$ 
where $\mathbf{U}_m= \MHm ( \MPm^T \MHm)^{-1} \in  \RR^{m\times d } $. The matrices 
$\MV(t^{k+1})^T \mathbf{U}_m \in \RR^{r\times m}$ can be pre-computed during the offline phase and stored to be later multiplied by the vectors $\MPm^T \Vh(\Vu_r(t^k,\Vxi),t^k,\Vxi)$ which contains the components $(j_1,\hdots,j_m)$ of the flux $\Vh(\Vu_r(t^k,\Vxi),t^k,\Vxi).$ In practice, the EIM algorithm is stopped when the precision $\varepsilon$ is reached, i.e. when 
\begin{equation}
\label{eq:stoperrdeim}
\displaystyle \max_{k,i} \| \Vh(\Vu_r(t^k,\Vxi^i),t^k,\Vxi^i)- \tilde \Vh(\Vu_r(t^k,\Vxi^i),t^k,\Vxi^i) \|_X
< \varepsilon.
\end{equation}
In the following computations, the flux  approximation defined by \eqref{eq:interph} is  used for the solution of the discrete reduced dynamical system \eqref{eq:discretereduced}.  Here, we assume that $\varepsilon$ is chosen small enough for the interpolation error between $\tilde \Vh$ and $\Vh$ to be neglected in our error analysis. 

\begin{remark}
As in \cite{Wirtz2014}, this method can be reduced to the adaptive selection of the indices 
$(i_1,\hdots,i_m)$ only, where the set of vectors $\{\Vh_{1},\dots \Vh_{m}\}$  is obtained by a  POD  of the matrix containing the snapshots of the discrete flux \cite{Chaturantabut2010}. 
\end{remark}

\begin{remark}
In the present work, we have applied an EIM to construct  an approximation of
 the nonlinear flux $\Vh(\Vu_r(t,\Vxi),t,\Vxi)$ on a time-independent basis $\{\Vh_1,\hdots,\Vh_m\}$ in $\RR^d$. An alternative approach, not considered here, would consist in approximating the flux on a time-dependent basis $\{\Vh_1(t),\hdots,\Vh_m(t)\}$ constructed using an algorithm similar to the proposed T-greedy algorithm,  using a global in time error estimate providing indices $\mathcal{I}_m$.  
\end{remark}

\subsubsection{Evaluation of the error estimate} \label{sec:discreteerr}
Now we detail practical aspects for the evaluation of the a posteriori error estimate  $ \tilde \Delta_r$ using the integration scheme \eqref{eq:estimate2}. For an efficient online computation of the error estimate,   we need to 
evaluate with a complexity independent of $d$ the term 
$\|\tilde \Vr^{k}(\Vxi)\|_X$ and the 
estimations $\tilde L_X[\MA]$ and
 $\tilde L_X[\Vh] (\Vu_r(t,\Vxi))$ of the  logarithmic Lipschitz constants  $ L_X[\MA]$ and
 $ L_X[\Vh] (\Vu_r(t,\Vxi))$. \\

 We first address the estimation of logarithmic Lipschitz constants. Using the time-dependent affine  representation of $\MA$, we have%\footnote{In the particular case where $Q_\MA=1$ and $\theta_\MA^1\ge 0$, we let $\tilde L_X[\MA] = L_X[\MA]  = \theta_A^1(\Vxi)  L_X[\MA_1(t)]$.}
\begin{equation} \label{eq:approxlipA}
L_X[\MA] = \sup_{0\neq \Vx \in X}  \sum_{i=1}^{Q_\MA} \theta_\MA^i(t,\Vxi)
\dfrac{\langle\Vx,\MA^i(t)\Vx\rangle_X }{\|\Vx\|_X^2} \le \sum_{i=1}^{Q_\MA}  \vert \theta_\MA^i(t,\Vxi)\vert  \vert L_X[\MA^i(t)] \vert :=\tilde L_X[\MA].
\end{equation} 
For the local logarithmic constant of $\Vh$ at $\Vu_r$, we consider the first-order linearization  of the flux
 $$
 \Vh(\Vv,t,\Vxi) \approx \Vh(\Vu_r(t,\Vxi),t,\Vxi)+ \nabla\Vh(\Vu_r(t,\Vxi),t,\Vxi)(\Vv - \Vu_r(t,\Vxi)), %+ o(\|\Vx - \Vu_r(t,\Vxi)\|_X),
 $$
  where $\nabla\Vh(\Vu_r(t,\Vxi),t,\Vxi)\in \RR^{d\times d}$ denotes the gradient
 of $\Vh(\cdot,t,\Vxi)$ at $\Vu_r(t,\Vxi)$, 
and the corresponding approximation 
$$
L_X[\Vh](\Vu_r(t,\Vxi)) \approx L_X[\nabla\Vh(\Vu_r(t,\Vxi),t,\Vxi)].
$$ 
Then, a first order approximation of $L_X[\Vh](\Vu_r)$ is obtained by computing the largest eigenvalue of the symmetric part of  $\nabla\Vh(\Vu_r(t,\Vxi),t,\Vxi)$, which is a problem with complexity depending on $d$.  In  \cite{Wirtz2014}, in order to obtain a complexity independent on $d$, the authors use a partial similarity transformation of the matrix $\nabla\Vh$ based on POD (combined with 
a matrix-DEIM approximation) which preserves the largest eigenvalue of the symmetric part of $\nabla\Vh$. Here, we adopt a simpler strategy which consists in interpolating the Lipschitz constants of  the matrices $\nabla\Vh$ already computed during the solution of the full-order dynamical system in the offline step. We introduce a very simple  nearest neighbor interpolation 
\begin{equation} \label{eq:approxjac}
L_X[\nabla\Vh(\Vu_r(t,\Vxi),t,\Vxi)] \approx \sum_{i=1}^r \gamma^i(\Vxi)L_X[\nabla\Vh](\Vu(t,\Vxi^i),t,\Vxi^i) :=  \tilde L_X[\Vh(\Vu_r(t,\Vxi),t,\Vxi)],
 % := \tilde L_X[\Vh](\Vu_r),
\end{equation}
where the $\{\Vxi^i\}_{i=1}^r$ are the parameters values selected during the greedy procedure, and where 
$$
\gamma^i(\Vxi) = 
 \begin{cases}  1 & \displaystyle \text{ if } d(\Vxi,\Vxi^i) = \min_{1\le j \le r} d(\Vxi,\Vxi^j),  \\ 
 0 & \mbox{ otherwise},
 \end{cases} 
 $$
 with $d(\cdot,\cdot)$ a metric on $\VXi$ (typically the Euclidian metric).
%Thus, the upper bound of $\tilde L_X[\Vf](\Vu_r)$ in \eqref{eq:estimate2} is replaced by its approximation 
%$$
%\tilde L_X[\Vf](\Vu_r) = \tilde L_X[\MA(t,\Vxi)] +  \tilde L_X[\nabla\Vh(\Vu_r(t,\Vxi),t,\Vxi)].
%$$
Note that when using the interpolation method, $\tilde L_X[\Vh](\Vu_r) $ is not necessarily an upper bound of $L_X[\Vh](\Vu_r)$ and therefore,  the solution 
of \eqref{eq:errorEDO} with $L_X[\Vh](\Vu_r)$ replaced by $\tilde L_X[\Vh](\Vu_r)$ may not provide a certified error bound. However, as it will be seen in the numerical experiments, it provides a very good error indicator in practice.\\

Now we detail the computation of 
$\|\tilde \Vr^{k}(\Vxi)\|_X$ in an offline/online strategy, where 
$\tilde \Vr^{k}(\Vxi)$ is given by 
 \eqref{tilder_discrete} with $\tilde \Vh$ the approximation of $\Vh$ defined by 
 \eqref{eq:interph}.
 In what follows, the indices $i$ and $j$ take values in  $\{1,\hdots,Q_{\MA}\}$ or $\{1,\hdots,Q_{\Vg}\}$.
In the offline phase, we pre-compute the following matrices and vectors required for the online phase:
\begin{align*}
\mathbf{K}^{1,k}_{ij} &=   {\MV^k}^T {\MA^{i,k}}^T \MP_{{X_r^k}^\perp} \MA^{j,k}  \MV^k,
&\mathbf{K}^{2,k}_{i}  &=\frac{\delta \MV^{k}}{\delta t}^T\MP_{{X_r^{k+1}}\perp}\MA^{i,k+1} \MV^{k+1},\\
\mathbf{K}^{3,k} &=\frac{\delta \MV^k}{\delta t}^T\MP_{{X_r^{k+1}}^\perp}\frac{\delta \MV^k}{\delta t},
&\mathbf{K}^{4,k} &= \frac{\delta \MV^k}{\delta t}^T\MP_{{X_r^{k+1}}^\perp} \mathbf{U}_m,\\
\mathbf{K}^{5,k}_i &={\MV^k}^T {\MA^{i,k}}^T\MP_{{X_r^k}^\perp}\mathbf{U}_m,
\end{align*}
and
\begin{align*}
\boldsymbol{b}^{1,k}_{i}  &=   \frac{\delta \MV^k}{\delta t}^T \MP_{{X_r^{k+1}}^\perp}\Vg^{i,k},
&\boldsymbol{b}^{2,k}_{ij} & = {\MV^{k+1}}^T{\MA^{i,k+1}}^T\MP_{{X_r^{k+1}}^\perp}  \Vg^{j,k},\\
\boldsymbol{b}^{3,k}_{i} &=  \mathbf{U}_m^T \MP_{{X_r^{k+1}}^\perp}\Vg^{i,k}.
\end{align*}
Then, in the online phase we compute
\begin{align*}
 \|\tilde \Vr^k(\Vxi)\|^2_X
&=  \langle \Valp^{k+1}(\Vxi), \MM_1^{k+1}(\Vxi) \Valp^{k+1}(\Vxi) \rangle_X  
+  \langle \Valp^{k}(\Vxi), \MM_2^{k}(\Vxi) \Valp^{k+1}(\Vxi) \rangle_X  \\
& +  \langle \Valp^{k}(\Vxi), \MM_3^{k}(\Vxi) \Valp^{k}(\Vxi) \rangle_X   + \langle \Valp^{k}(\Vxi),\MM_4^k(\Vxi)\Vv_1^k(\Vxi)\rangle_X\\
 &  + \langle \Valp^{k+1}(\Vxi),\MM_5^{k+1}(\Vxi)\Vv_1^k(\Vxi)\rangle_X  + \langle \Vv_1^k(\Vxi),\MM_6^k\Vv_1^k(\Vxi)\rangle_X
 \\  & + \langle \Vv_1^k(\Vxi), \Vv_2^k(\Vxi)  \rangle_X+ \langle \Valp^k(\Vxi),  \Vv_3(\Vxi) \rangle_X 
+  \langle \Valp^{k+1}(\Vxi),  \Vv_4^k(\Vxi) \rangle_X  + b^k
\end{align*}
where the reduced quantities are defined by
$$
\MM_1^k(\Vxi)= 
\sum_{i,j=1}^{Q_{\MA}}  \theta_{\MA}^{i,k}(\Vxi)\theta_{\MA}^{j,k}(\Vxi) \mathbf{K}^{1,k}_{ij},\quad
\MM_2^k(\Vxi)  =  - 2  \sum_{i=1}^{Q_{\MA}}  \theta_{\MA}^{i,k+1}(\Vxi) \mathbf{K}^{2,k}_{i},  \quad 
\MM_3^k(\Vxi) =    \mathbf{K}^{3,k}, 
$$
$$
\MM_4^k(\Vxi)  =   -2 \mathbf{K}^{4,k},  \quad 
\MM_5^k(\Vxi) =  2  \sum_{i=1}^{Q_{\MA}}  \theta_{\MA}^{i,k}(\Vxi) \mathbf{K}^{5,k}_{i} , \quad
\quad \MM_6^k    =    \mathbf{U}_m^T \MP_{{X_r^k}^\perp} \mathbf{U}_m,
$$
\begin{align*}
\Vv_1^k(\Vxi) & =   \mathbf{P}^T_m \Vh^k(\Vu_r^k(\Vxi),\Vxi),
& \Vv_2^k(\Vxi) &= 2 \sum_{i=1}^{Q_{\Vg}} \theta_{\Vg}^{i,k}(\Vxi) \boldsymbol{b}^{3,k}_i,\\
\Vv_3^k(\Vxi) &= -2 \sum_{i=1}^{Q_{\Vg}}  \theta_{\Vg}^{i,k}(\Vxi) \boldsymbol{b}_{i}^{1,k}, \quad 
&\Vv_4^k(\Vxi) &= 2  \sum_{i=1}^{Q_{\MA}} \sum_{j=1}^{Q_{\Vg}}  \theta_{\MA}^{i,k+1}(\Vxi)  \theta_{\Vg}^{j,k}(\Vxi) \boldsymbol{b}_{ij}^{2,k},
\end{align*}
and 
$$
 b ^k = \sum_{i,j=1}^{Q_{\Vg}}  \theta_{\Vg}^{i,k}(\Vxi) \theta_{\Vg}^{i,k}(\Vxi) \langle \Vg^{i,k},\Vg^{j,k}\rangle_X. 
$$

  %%%%%%%%%%%%%%%%%%%%%%%%%%%
 %%% NUMERICAL EXPERIMENT 
 %%%%%%%%%%%%%%%%%%%%%%%%%%%
 
 \section{Numerical experiments}\label{sec:numres}
 
 In this section, we present numerical results where 
we compare model order reduction methods using either time-independent or time-dependent reduced spaces. Also, we evaluate the effectiveness of the a posteriori  error estimate derived in Subsection \ref{subsec:error}. 
We denote by MTI (resp. MTD) the model order reduction method using time-independent (resp. time-dependent) reduced spaces. If not mentioned, reduced spaces for MTI (resp. MTD) are constructed with a POD-greedy algorithm (resp. T-greedy algorithm) using a training set $\VXi_{train}$  whose size will be specified in each case. 
 \\
 
 {
 We consider three test cases which are particular cases of the following nonlinear partial differential equation defined on a domain $\Omega$ which is a domain in $\RR^s$ (with $s=1,2$) and a time interval $I=(0,T)$:
 \begin{equation} \label{eq:qsl}
\frac{\partial}{\partial t} u + \nabla \cdot (c(u,\Vxi ) u) + \Va(\Vx,\Vxi) \cdot \nabla u
- \mu(\Vxi) \Delta u  =   g(\Vx,t,\Vxi), \quad \text{on} \quad \Omega \times I,
\end{equation}
with appropriate boundary conditions and a parameter-independent initial condition $u^0(\Vx)$. We denote $\Vx = (x_1,\dots,x_s)^T$ the spatial variable and the corresponding spatial differential operators $\nabla = (\frac{\partial}{\partial x_1},\dots, \frac{\partial}{\partial x_s})^T$ and $\Delta = \sum_{i=1}^s \frac{\partial^2}{\partial x_i^2}$. Here $\Vxi = (\xi^1,\dots, \xi^p)$ denotes a random vector with values in $\VXi$ and with independent components. The functions $c: X \times \VXi \to \RR$ and $\Va : \Omega \times \VXi \to \RR^s$ will be specified in each test case. Finally,  
$\mu : \VXi \to \RR$ is a parameter-dependent coefficient and $ g : \Omega \times I \times \VXi \to \RR$ is a given source term.
We consider an approximation  of $u$ obtained with an appropriate scheme (e.g. finite differences, finite element) depending on the test case. This yields a system of $d$ ordinary differential equations of the form \eqref{eq:dynsystgen}. Here, $d=d(n)$ corresponds to the dimension of the discrete problem taking into account the boundary conditions (e.g. when using finite differences for one-dimensional problem, $d=n-2$ for Dirichlet conditions, or $d=n-1$ for periodic conditions). 
Finally, we denote the spatial approximation of the solution as follows $\Vu(t) = (u_i(t))_{i=1}^{d} \in \RR^d$, with approximations of both initial condition $\Vu^0 \in \RR^d$, and source term $\Vg(t,\Vxi) \in \RR^d$ that will be specified later. We assume that the time integration scheme is accurate enough for the error due to this approximation to be neglected. \\

Numerical experiments were conducted with an in-house code written in Matlab\textsuperscript{\textregistered}. 
}

\subsection{Test case 1}\label{subsec:t1}
{
Let $\Omega= (0,1)$ and $I=(0,0.2)$. We consider an advection equation with 
$\mu(\Vxi)=0$,   $g(x,t,\Vxi)=0$, $c(\Vu,\Vxi)=0$ and $a(x,\Vxi ) =a(\Vxi)= a_0+a_1\xi^1$, with 
$\xi^1 \sim U(-1,1)$,
 $a_0 =1$ and $a_1=0.5$. The initial condition is a smooth function given by $u_{cont}^0(x) = \frac{1}{\sqrt{2\pi}}\exp(-\left( \frac{x-0.6}{0.05}\right)^2)$. 
We consider an approximation  of $u$ obtained with an appropriate finite difference scheme over a uniform discretization $\{x_i\}_{i=1}^{n}$ of $\Omega$,  where $x_i=a + i \delta x$, $\delta x = \frac{b-a}{n-1}$. 
This yields a system of $d$ ordinary differential equations of the form \eqref{eq:dynsystgen}. 
We impose periodic boundary conditions and consider a finite difference
upwind scheme  on a uniform discretization with {$n = 2001$ points}. We use an explicit Euler time integration scheme 
\begin{equation}
\label{eq:pbadv}
\Vu^{k+1}(\Vxi)=(\MI_d  + \delta t \MC(\Vxi))  \Vu^k(\Vxi),
\end{equation}
with initial condition $\Vu^0= (u^0(x_i))_{i=1}^{d}$. Here $\delta t = 0.5 \frac{\delta x}{{c_0+c_1}}$,  where 
$\mathbf{C}(\Vxi) \in \RR^{d \times d} $ corresponds to the discrete 
advection operator with periodic boundary conditions.
}
\paragraph{Deterministic case}
We consider a deterministic problem with a fixed value 
$\Vxi=\Vxi_0=0.65$.
Here, we compare the approximations obtained by projections on reduced spaces constructed in two different ways.  In the first method (MTI), the reduced space $X_r$ is  time-independent and generated by the first $r$ modes of the POD of the trajectory $t \mapsto \Vu(t,\Vxi_0)$, with $r \in \{1,2,5,10,20,50,100,200 \}$. In the second method (MTD), we consider the one-dimensional time-dependent space 
$X_r(t)=\spann\{\Vu(t,\Vxi_0)\}$, $r=1$.
 
 \begin{figure}[H]
\centering
 \includegraphics[height=4.2cm]{./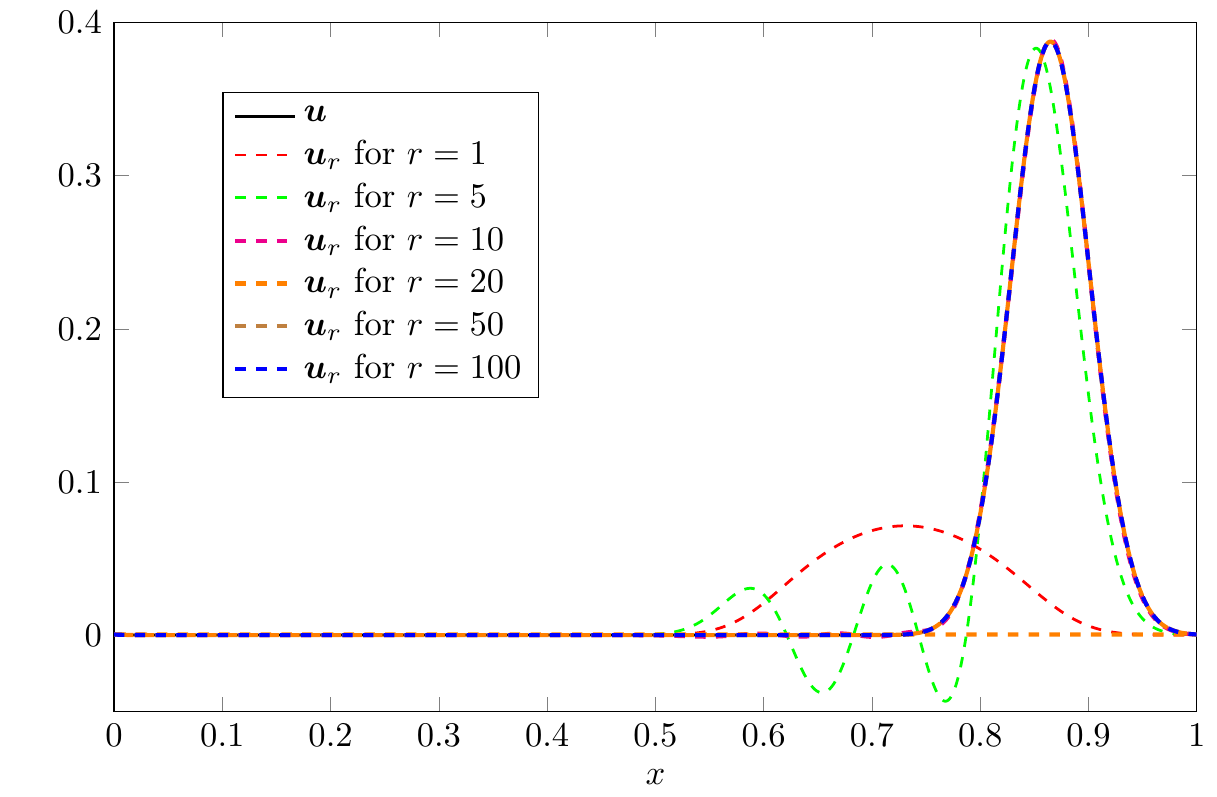}
  \includegraphics[height=4.2cm]{./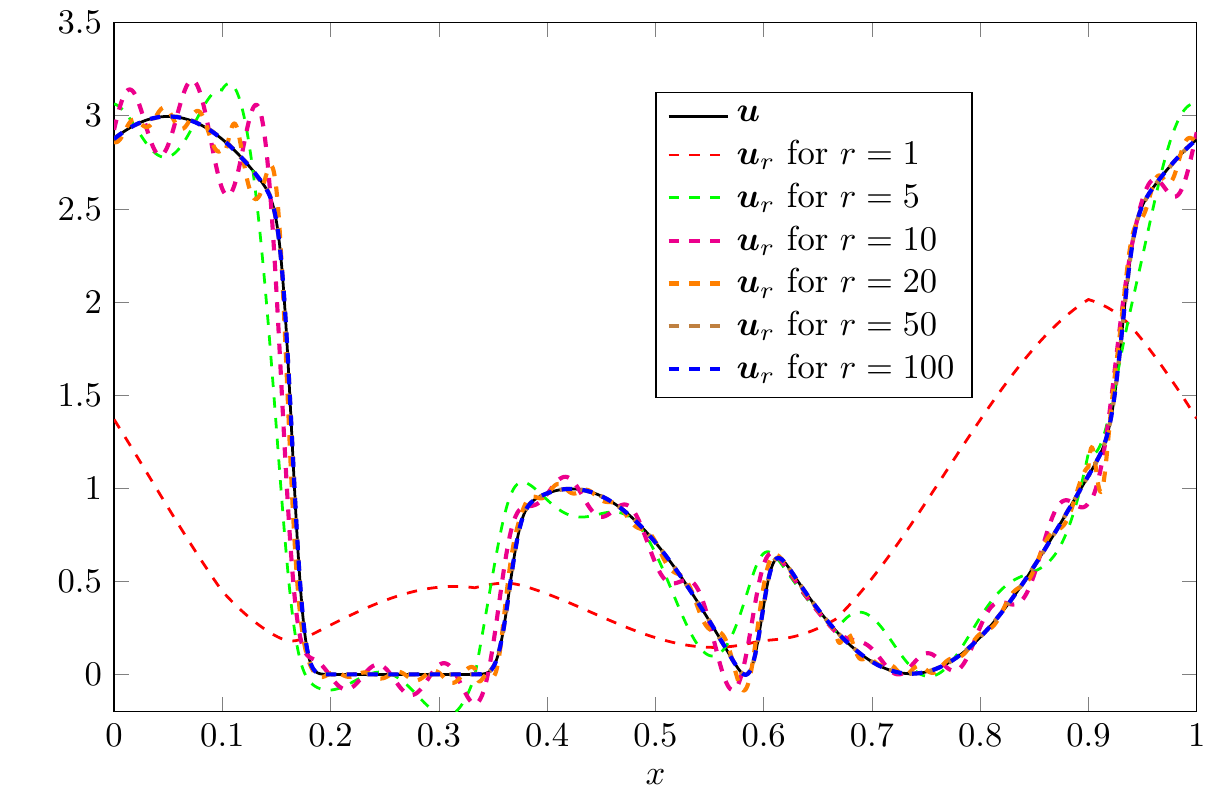}
   \caption{Test case 1: comparison for discontinuous (left) and continuous (right) initial conditions at final times of the exact solution $\Vu$ and the approximation $\Vu_r$
  computed with MTI for $r \in  \{1,5,10,20,50,100 \}$.
 \label{fig:ADV1}}
 \end{figure}
 
 \begin{table}[H]
\centering
{
 \begin{tabular}{|c|c|c|}
  \multicolumn{3}{c}{Relative errors for $u_{cont}^0$}\\
\hline
$\dim(X_r) $ & {$E_2$}  & $E_\infty$\\
\hline   \hline
$1$   & $0.027831$ 			& $0.76058$\\
$2$   & $ 0.024075$		        &  $  0.95713$\\
$5$   & $0.0078461$		        &  $ 0.26984$\\
$10$ &   $ 0.00021853$		&  $ 0.0075156$ \\
$20$ & $ 4.8616e-09$			&  $1.6719e-07$ \\
$50$ & $4.0924e-17$		& $1.4074e-15$\\
$100$ & $4.4353e-17$		& $1.5253e-15$ \\
$200$ & $ 4.5623e-17$		& $1.569e-15$ \\
   \hline
\end{tabular}
 \begin{tabular}{|c|c|c|}
 \multicolumn{3}{c}{Relative errors for $u_{disc}^0$}\\
\hline
$\dim(X_r) $ & {$E_2$}  & $E_\infty$\\
\hline   \hline
$1$   & $0.022116$ 			& $0.76058$\\
$2$   & $0.0135$		        &  $0.46427$\\
$5$   & $0.0060424$		        &  $0.2078$\\
$10$ &   $0.0040044$		&  $0.13771$ \\
$20$ & $ 0.002043$			&  $0.07026$ \\
$50$ & $4.7139e-05$		& $0.0016211$\\
$100$ & $ 1.3891e-12$		& $4.7771e-11$ \\
$200$ & $ 6.3966e-17$		& $2.1998e-15$ \\
   \hline
\end{tabular}
}
  \caption{Test case 1: relative errors $E_2$ and $E_\infty$ obtained with the MTI with respect to $\dim(X_r)$ for both continuous (left) and discontinuous (right) initial conditions conditions.  \label{tab:ADV1}}
\end{table}

{As shown onFigure \ref{fig:ADV1}, the approximations obtained with MTI are satisfactory when the dimension of the  reduced 
space $X_r$ is greater than $20$. 
This is confirmed by the values of the relative  errors $E_q = \|\Vu_r-\Vu\|_{I,q}/ \|\Vu\|_{I,q}$ given in Table \ref{tab:ADV1} for $q=2$ and $q=\infty$, where $\| \Vu\|_{I,q}$ is the natural norm on $L^q(I;X)$.
In particular, the relative error in $L^2$-norm (resp $L^\infty$-norm) is of order $10^{-17}$ (respectively  $10^{-15}$) for a space $X_r$ with a reduced dimension of $50$. As expected, MTD, with a reduced space $X_1(t)$ of dimension 1 containing the exact solution, gives relative errors at the machine precision ($E_2 =5.7\, 10^{-17}$ and $E_\infty=1.3\, 10^{-15}$)
For obtaining a very accurate precision, the method MTI requires a reduced space with rather high dimension, thus highlighting the limits of the POD method for  transport equations. Now, we illustrate the impact of the smoothness of the solution. For this purpose, we consider the same test case with a discontinuous initial condition given by  $u_{disc}^0(x)=\Ind_{[0.1,0.9]}(x)\cdot( \lfloor 3 x \rfloor + \sin(10x))$ 
\footnote{ 
Here, $\Ind_A$ is the characteristic function of a subset $A\subset \RR^d$ and $\lfloor x \rfloor$ denotes the integer part of $x$.}. 
The approximate solution is plotted onFigure \ref{fig:ADV1}  and the relative errors between the reduced approximation and the exact solution are summarized in the right table of Table \ref{tab:ADV1}. Here, we clearly observe that for non smooth initial condition $u_{disc}^0$ a  reduced space with higher dimension is needed for well approximating the solution (e.g. dimension 200 against 50 for reaching the machine precision). Concerning the MTD, we still obtain relative errors up to the machine precision with a one-dimensional time dependent reduced space ($E_2 =3.9\,10^{-17}$ and $E_\infty=1.3\,10^{-15}$).% . In what follows, we pursue our numerical experiments with $u_{disc}^0$ as initial condition.
}\\

\paragraph{General case} 
We now consider the parameter-dependent problem and compare the approximations obtained with MTI and MTD for a subspace of dimension $r=20$ generated from a training set of size $30$. 
Figure \ref{fig:ADV0} plots the evolution with time of the approximation obtained with MTD evaluated at $\Vxi=0.65$. In Figure \ref{fig:ADV2}, the exact solution is compared to the approximations obtained by MTI and MTD, at final time and for $\Vxi=0.65$. The  solutions of the reduced order models are very close to the exact solution. Nevertheless, 
we notice that the approximation computed with MTI present  small oscillations whereas the one obtained with MTD  matches very well the exact solution (see right plot ofFigure \ref{fig:ADV2}).\\ 
 
    \begin{figure}[H]
 \centering
 \includegraphics[height=4.75cm]{./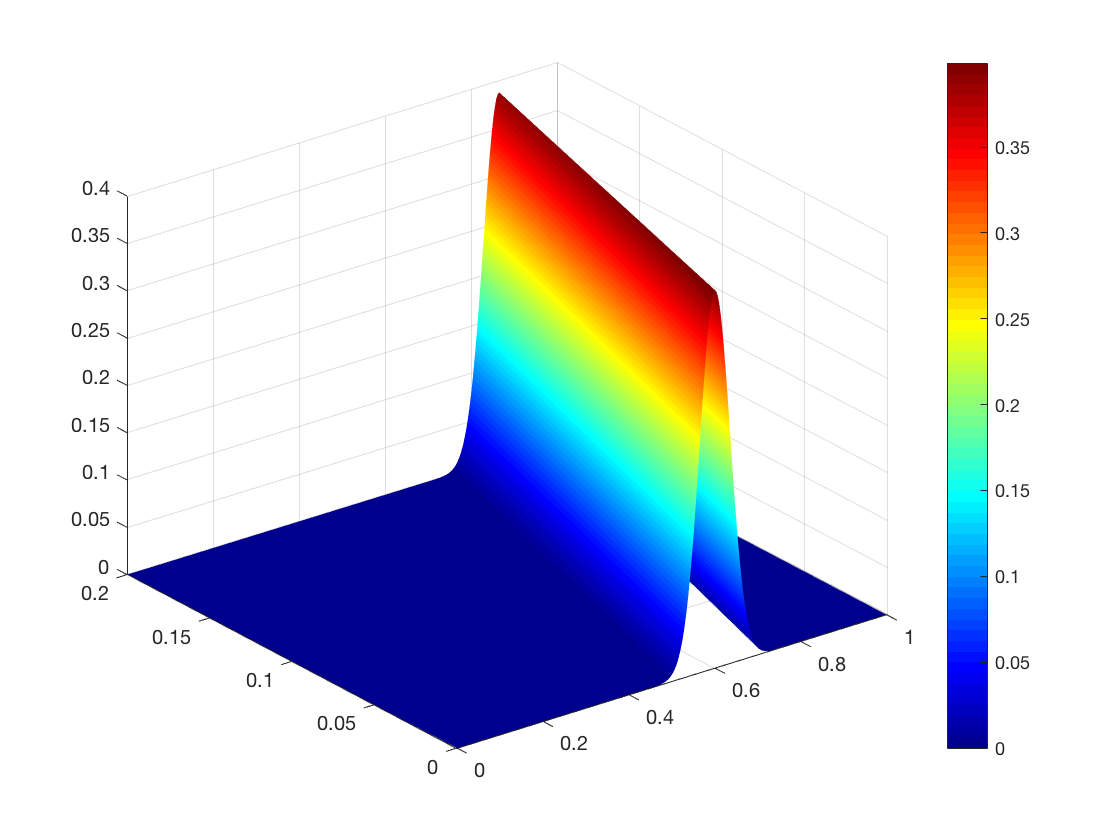}
 \includegraphics[height=4.75cm]{./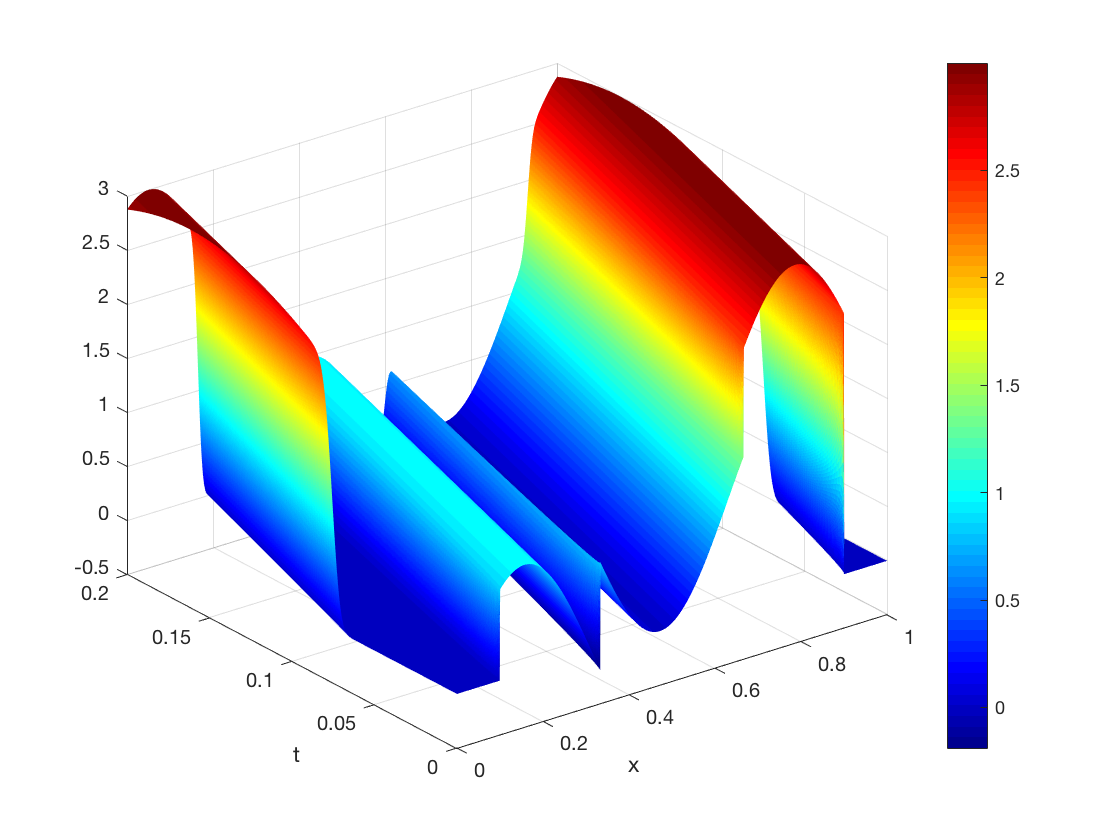}
 \caption{Test case 1: Time evolution of $\Vu_r$ computed with MTD for $r=20$ and $\Vxi=0.65$.}
 \label{fig:ADV0} 
 \end{figure}
  
      \begin{figure}[H]
 \centering
 \includegraphics[height=4.1cm]{./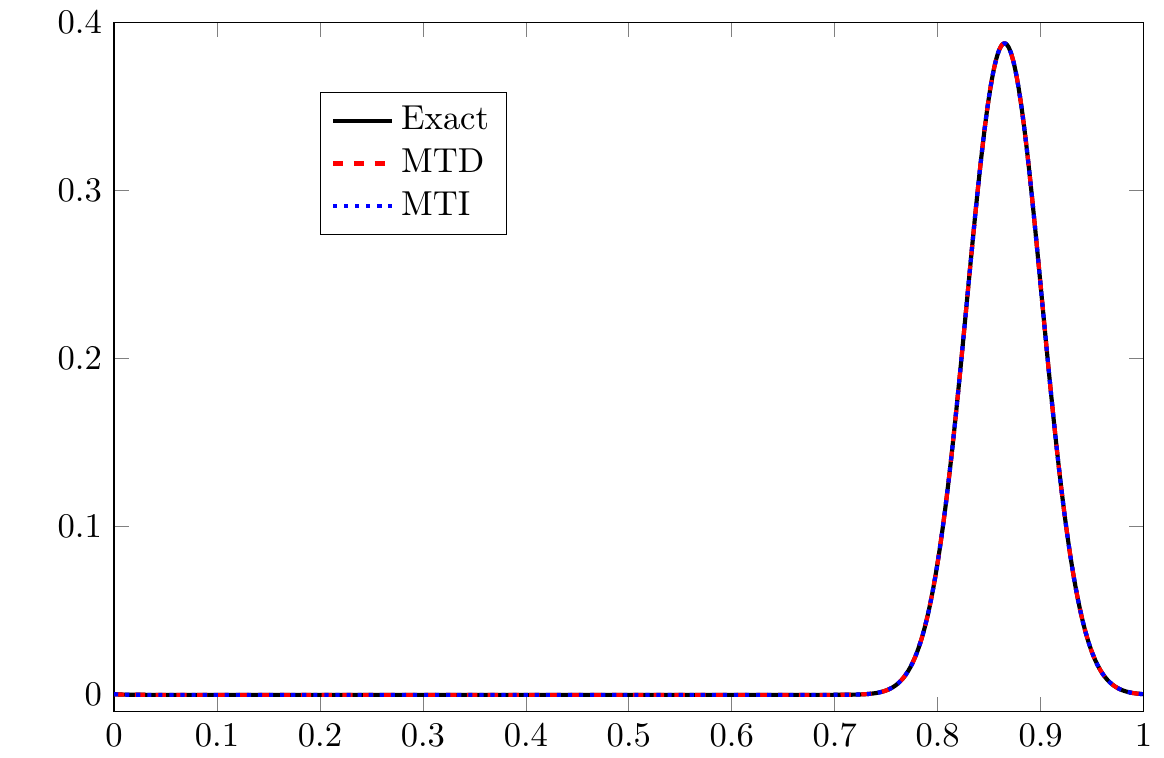} 
 \includegraphics[height=4.1cm]{./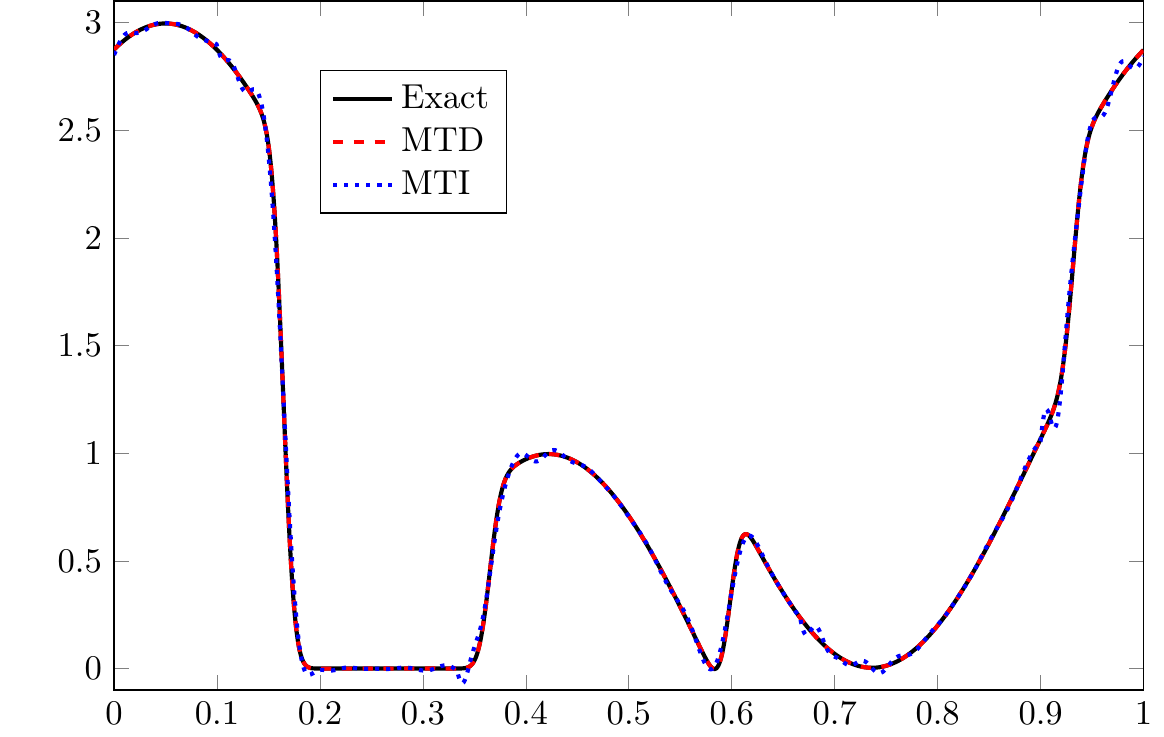} 
\caption{Test case 1: Reduced approximations $\Vu_r$
provided by MTD  and MTI compared to the exact solution $\Vu$ for both continuous (left) and discontinuous (right) initial conditions, at final time $T=1$ for $r=20$ and $\Vxi=0.65$.}
 \label{fig:ADV2} 
 \end{figure}
 
Then, 
we estimate the expectation $\Exp(E_q(\Vxi))$ and maximum $\max_{\Vxi \in \VXi}(E_q(\Vxi))$ of the relative errors $E_q$ respectively by the empirical mean and by the maximum of the values of $E_q$ taken at $50$ randomly chosen values of the parameters. 
These quantities are depicted onFigure \ref{fig:ADV5} for different values of $r$ and for both continuous and discontinuous initial conditions. { For the same dimension of the reduced spaces, MTD  clearly provides a more accurate approximation than 
 MTI in particular when considering discontinuous initial condition. %For $r=20$, we reach a precision of order $10^{-5}$ with MTD and of order $10^{-2}$ with MTI.
}
\begin{figure}[H]
 \centering
 \includegraphics[height=4cm]{./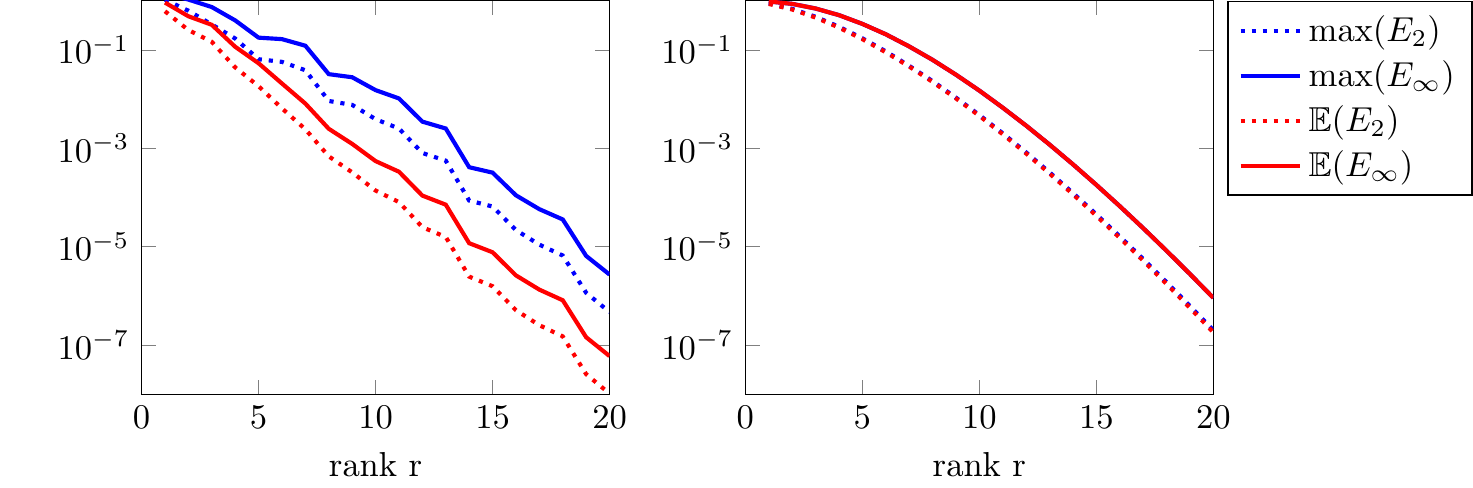} 
 \includegraphics[height=4cm]{./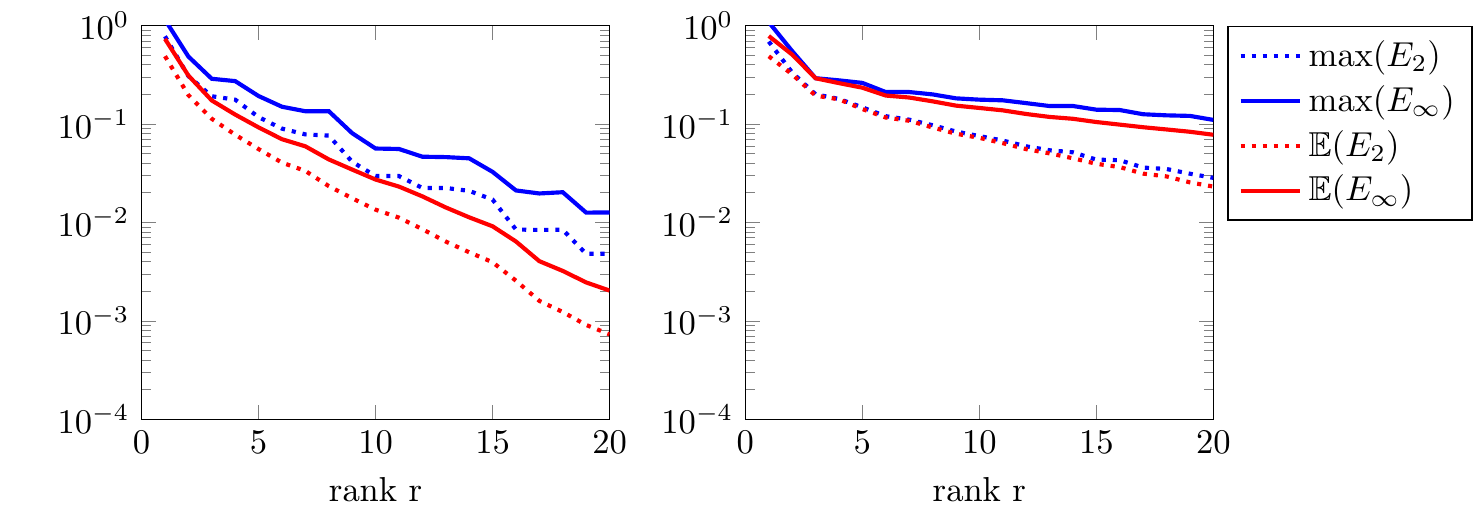} 
\caption{Test case 1:  Statistical estimations of the expectation and maximum of the relative errors $E_2$ and $E_\infty$ with respect to the reduced dimension $r$, for  MTD (left) and MTI (right)  for both continuous (top) and discontinuous (bottom) initial conditions. }
 \label{fig:ADV5} 
 \end{figure}
 
 \paragraph{Validation of the error estimate}
  We now investigate the efficiency of the proposed error estimate $\tilde \Delta_r$.Figure \ref{fig:ADV4} shows the evolution with time of 
$\tilde \Delta_r(t,\Vxi)$ and the exact error $\|\Ve_r\|_X$ for MTD and MTI, evaluated  at $\Vxi=0.65$. As expected, we observe that the error estimates have an exponential behavior in time. The error estimate $\tilde \Delta_r$ is much sharper for MTD than for MTI. For MTI, $\tilde \Delta_r$ is a very pessimistic upper bound of the error. 
 {This observation is confirmed by computing the effectivity index $\kappa(t,\Vxi) =\tilde \Delta_r(t,\Vxi)/\|\Ve_r(t,\Vxi)\|_X$. Figure \ref{fig:ADVDIF5} plots the evolution with time of a statistical estimation of  the mean of the effectivity index $\Exp(\kappa(t,\Vxi))$, which  remains close to $1$ with MTD and takes high values with MTI.}
 
   \begin{figure}[H]
 \centering
  \includegraphics[height=3.5cm]{./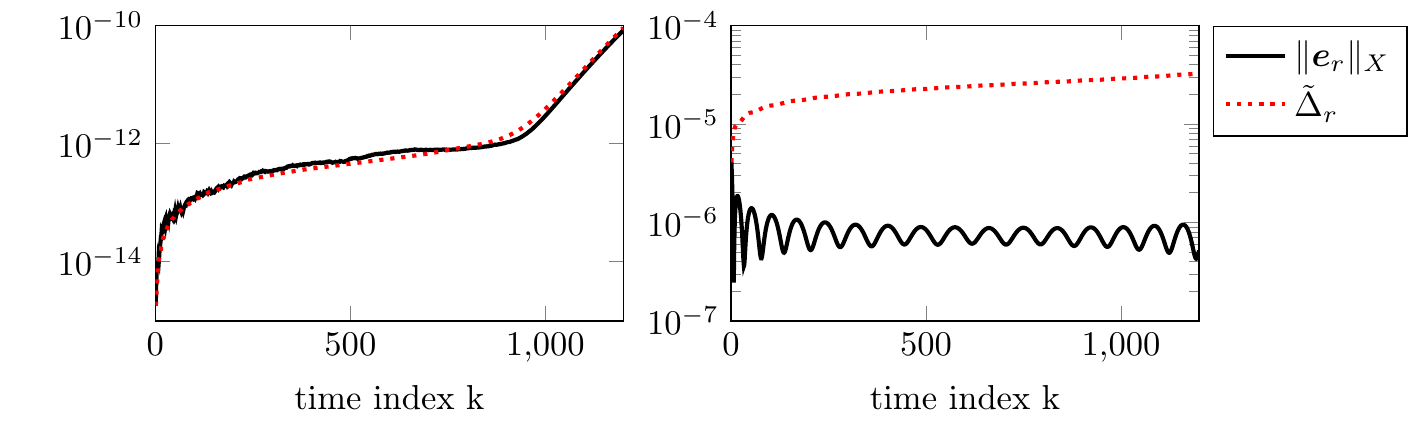}
    \includegraphics[height=3.5cm]{./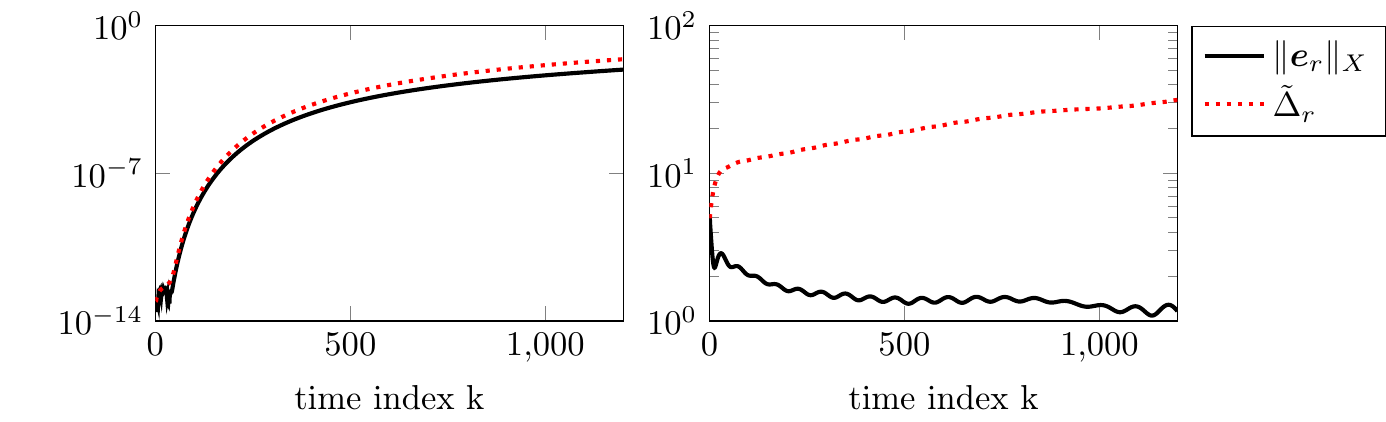}
  \caption{Test case 1: Evolution with time of $\tilde \Delta_r(t,\Vxi)$ and $\|\Ve_r(t,\Vxi)\|_X$ for  MTD (left) and MTI (right) for both continuous (top) and discontinuous (bottom) initial conditions, for $r = 20$ and $\Vxi=0.65$.} \label{fig:ADV4}
 \end{figure}

\begin{figure}[H]
 \centering
  \includegraphics[height=4cm]{./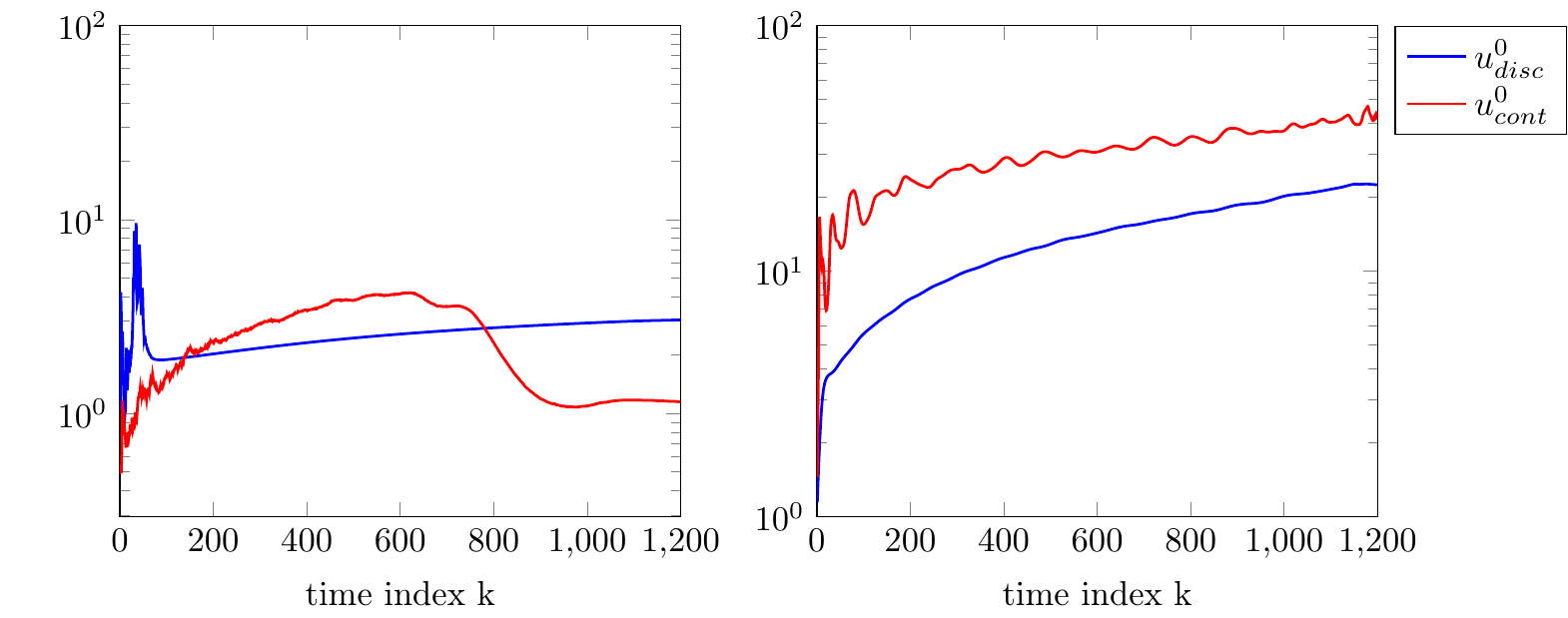}
  \caption{Test case 1: Evolution with time of a statistical estimation of $\Exp(\kappa(t,\Vxi))$ for MTD (left) and MTI (right) for both continuous and discontinuous initial conditions. }\label{fig:ADV3}
 \end{figure}

 {
 \paragraph{CPU times} 
Now, we briefly discuss the CPU computational costs of both MTI and MTD methods with POD-greedy and T-greedy algorithms respectively. We restrict the presentation to the case of the initial condition $u^0_{disc}$ (similar results are observed for $u^0_{cont}$). The dimension of reduced spaces is fixed to $r=20$ for both methods. As summarized in Table \ref{tab:ADVCPU}, both offline and online costs are roughly of the same order but the MTD provides a reduced order model which is by $2$ orders of magnitude (see Figure  \ref{fig:ADV5}) better than the reduced order model provided by MTI. Yet, we notice that for the MTI, CPU-times are slightly higher. This is probably due to additional computations involved by time-dependent reduced quantities in MTD. We may guess that the CPU times for both methods would be similar for non autonomous problems with time affine dependent coefficients.
}

\begin{table}[H]
 \centering
% discontinu
  \begin{tabular}{|c|c|c|}
%   \multicolumn{3}{c}{With $u_{disc}^0$} &  \multicolumn{3}{c}{With $u_{cont}^0$}\\
   \hline
& Offline & Online \\ %& Offline & Online\\
\hline   \hline
MTD & $1955.1$ & $11.24$ \\% & POD-Greedy & $1756.8$ & $9.05$\\
MTI & $1517.5$  &  $8.94$ \\%& T-Greedy & $ 2053.4$  &  $13.51 $\\
   \hline
\end{tabular}
  \caption{Test case 1: Offline and online CPU-times for both MTD and MTI, with $u_{disc}^0$.} \label{tab:ADVCPU}
\end{table}
 
\subsection{Test case 2}

{We now consider a two-dimensional advection-diffusion equation with  source term $g = 0 $,  $c(u,\Vxi) = 0$, $\mu(\Vxi) = \mu^0(2+\cos( \pi\xi^1)^2)$, with $\mu^0 = 0.5$,  and $a(\Vx,\Vxi )=a(\Vxi) (x_2-0.5, 0.5-x_1)^T$, with $a(\Vxi) = a_0\sin(\pi\xi^2)$ and $a_0 =0.1$. Here, we choose 
 $\xi^1,\xi^2 \sim U(-1,1)$ as independent uniform random variables. The spatial domain is $\Omega = (0,1)^2$ and the time interval $I = (0,0.2)$. The initial condition is given by $u^0(x) = e^{-(x_1-\frac{2}{3})^2-(x_2-\frac{2}{3})^2}\sin(2\pi x_1)\sin(2\pi x_2)$ and we impose homogeneous Dirichlet boundary conditions. We consider a $\mathbb{P}_1$-Lagrange finite element discretization with $n=1681$ nodes \footnote{The mesh is chosen fine enough to ensure a P\'eclet number smaller than 1.} together with an implicit Euler scheme with $K=400$ time steps, yielding the following scheme
\begin{equation}
(\MI_X - \delta t \MA(\Vxi)) \Vu^{k+1}(\Vxi)= \Vu^k(\Vxi),
 \label{eq:implicit}
\end{equation}
with $\MA(\Vxi) =  \mu(\Vxi) \MA_D + a(\Vxi)\MA_C  \in \RR^{ d \times d }$  ($d=1521$), where  $\MA_C$ and $\MA_D$  are the product of the mass matrix inverse with the discrete two dimensional diffusion and convection operators, obtained by finite element approximation,  respectively. \\

 We first present the reduced approximations computed by MTI and MTD with reduced spaces of dimension $r=30$ selected with greedy algorithms using a training set of size $60$. 
Figure \ref{fig:ADVDIF1} represents the approximation obtained with MTD evaluated for different values of the parameter $\Vxi \in \{ (0.2,-1),(0.2,0),(0.2,0.5)\}$. 
 
 \begin{figure}[H]
 \centering
 \includegraphics[height=3cm]{./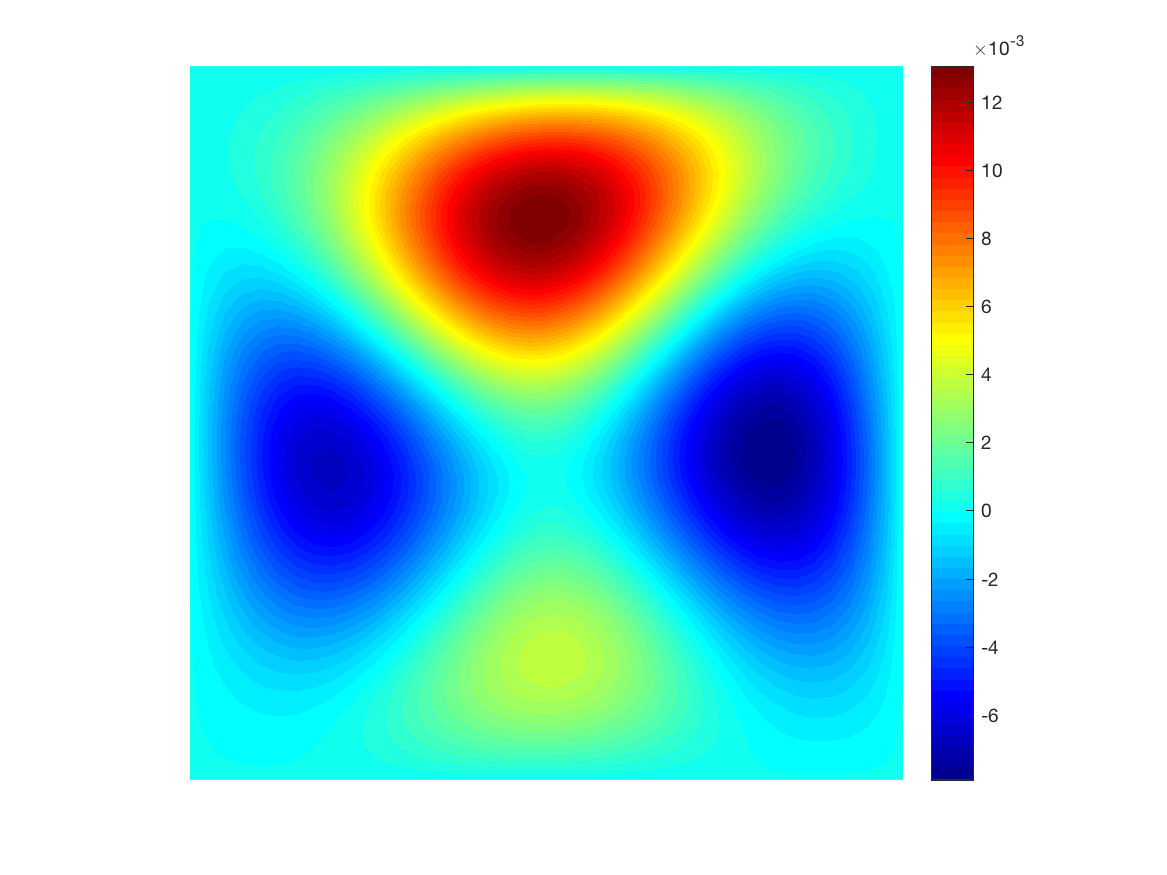}
\includegraphics[height=3cm]{./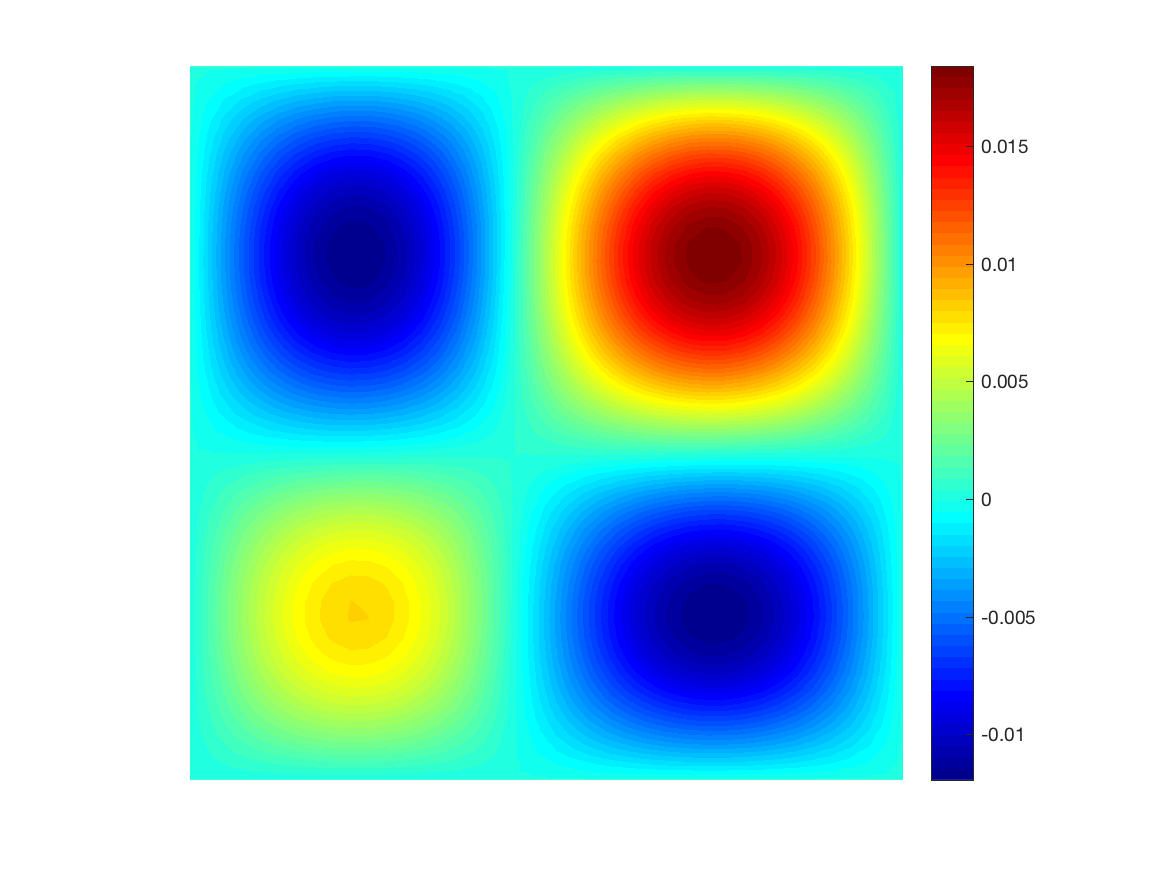}
\includegraphics[height=3cm]{./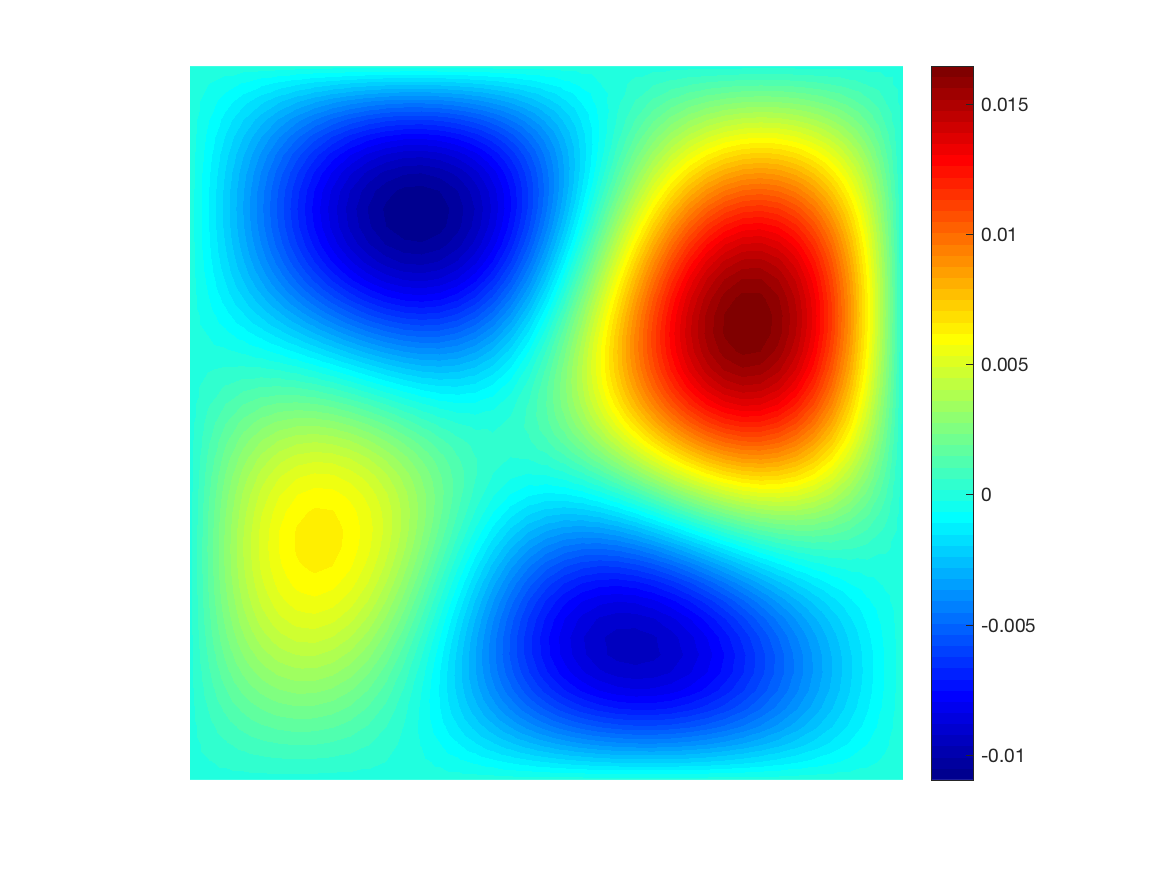}
 \caption{Test case 2: reduced approximation $\Vu_r$ computed with MTD for $r=30$ at final time and $\Vxi \in\{ (0.2,-1),(0.2,0),(0.2,0.5)\}$.}
 \label{fig:ADVDIF1}
 \end{figure}
   For the same parameter values, the absolute distance to the exact solution at final time  is given  onFigure \ref{fig:ADVDIF1} for MTI and MTD. It shows that the approximations obtained by both order reduction methods are in good agreement with the exact solution. Once again, we observe that MTD provides a better approximation with an error of order $10^{-9}$ against $10^{-5}$ for MTI.
 
\begin{figure}[H]
 \centering
 \includegraphics[height=3cm]{./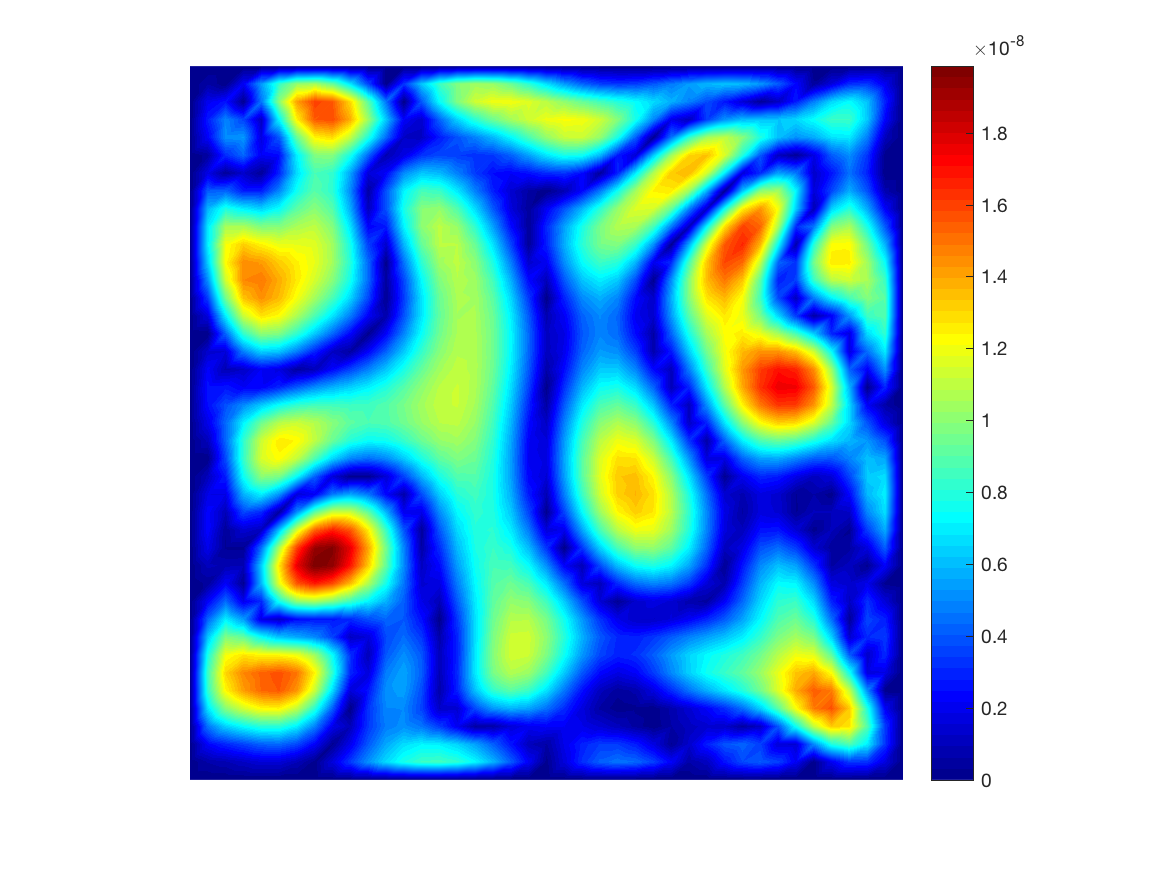}
\includegraphics[height=3cm]{./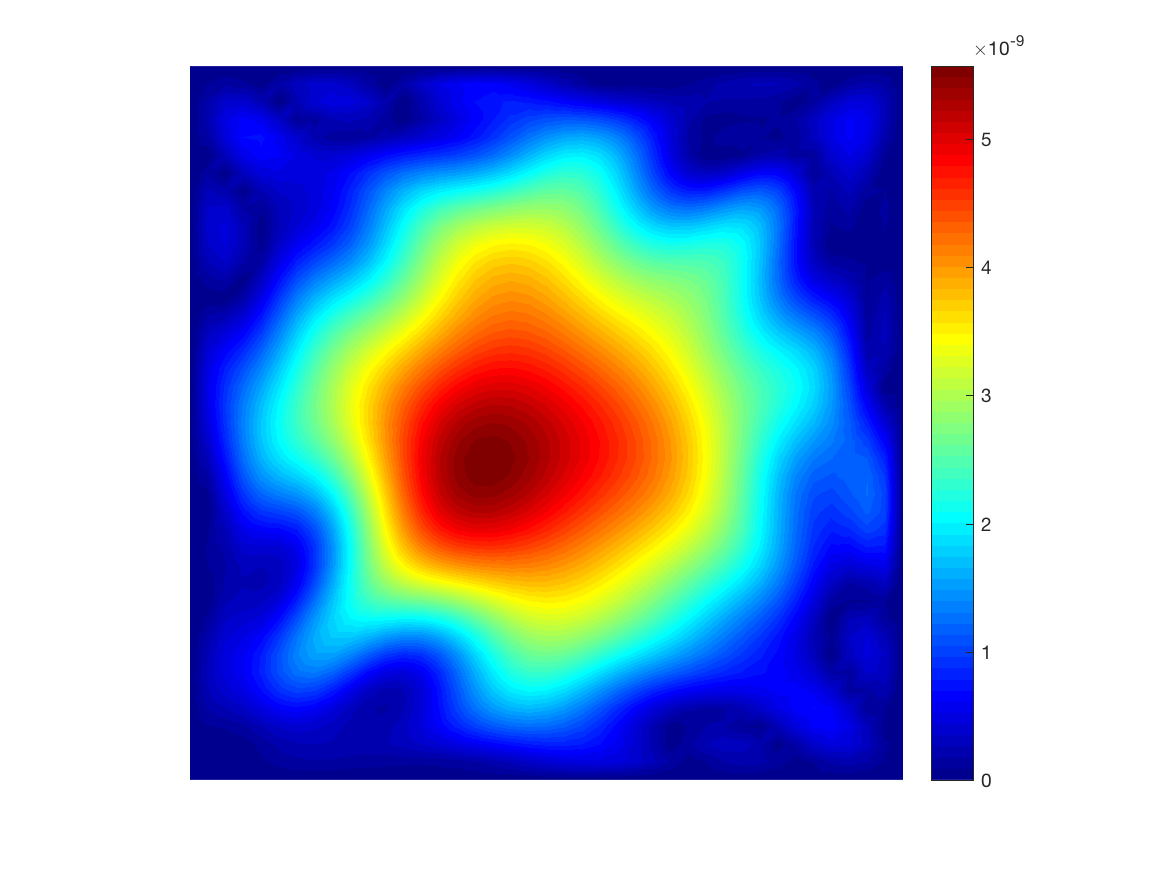}
\includegraphics[height=3cm]{./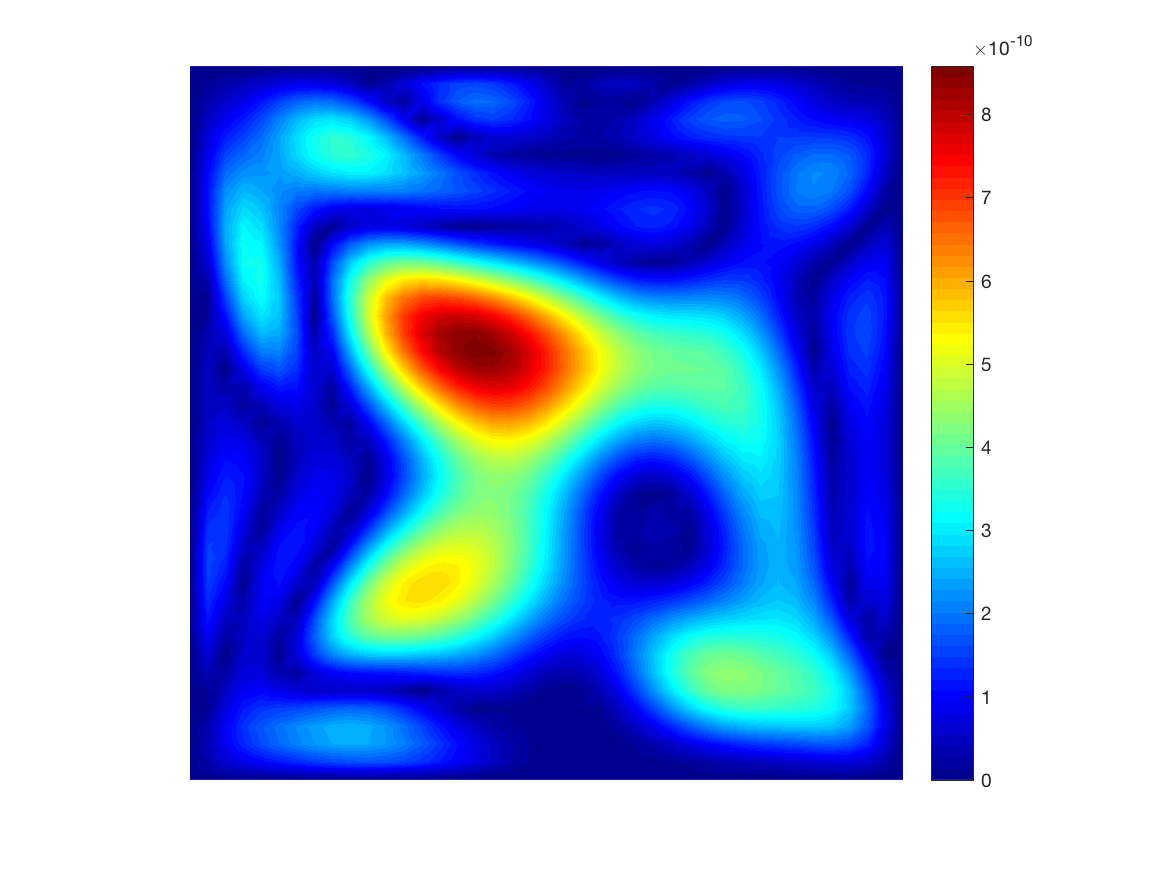}\\
 \includegraphics[height=3cm]{./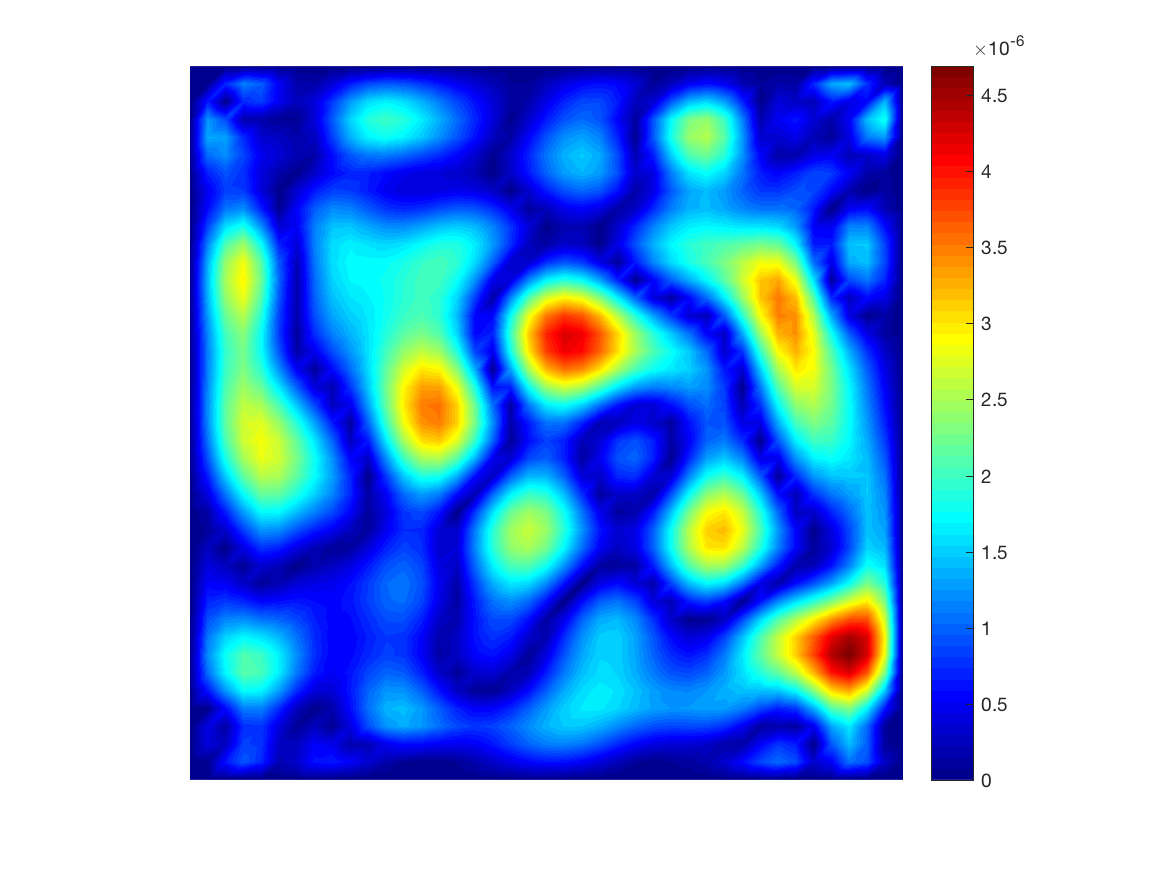}
\includegraphics[height=3cm]{./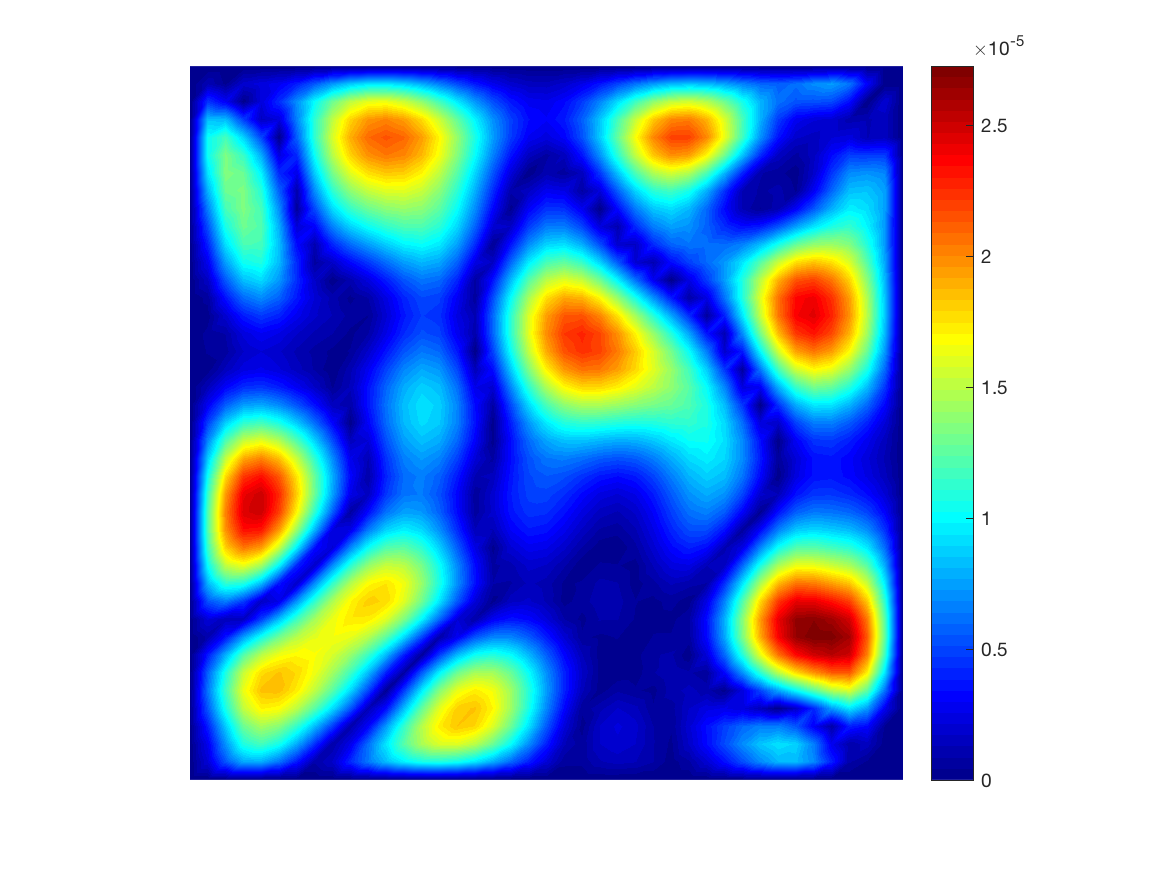}
\includegraphics[height=3cm]{./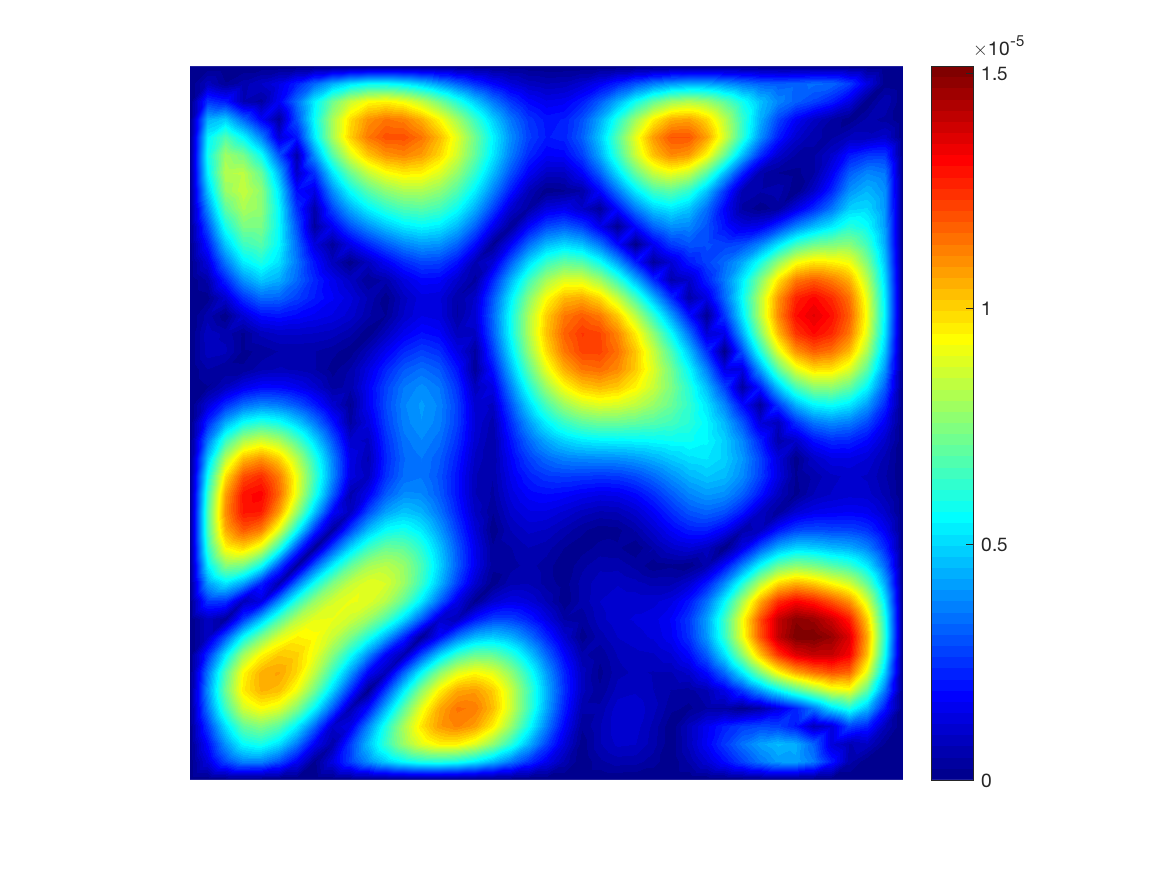}
\caption{Test case 2: absolute error between the exact solution and reduced approximations $|\Vu-\Vu_r|$
provided by MTD (top) and MTI (bottom) at final time for $r=30$ and $\Vxi \in\{ (0.2,-1),(0.2,0),(0.2,0.5)\}$.}
 \label{fig:ADVDIF2}
 \end{figure}
 
 }
 
 {
 Estimations of the expectations $\Exp(E_q(\Vxi))$ and maxima $\max_{\Vxi \in \VXi}(E_q(\Vxi))$  of the relative errors $E_q$ (using a random sample of size $50$ in $\VXi$) are plotted on Figure \ref{fig:ADVDIF3} for different values of $r$. For this advection-diffusion problem, we first observe that both order reduction methods provide accurate approximations with low-dimensional  spaces.  However, for the same dimension of the reduced spaces, the approximation obtained by MTD is more accurate than the approximation obtained by MTI. Indeed, in order to reach a relative error of $10^{-8}$, MTD requires $r=10$ while MTI requires $r=30$. 
 
\begin{figure}[H]
\centering
\includegraphics[height=5cm]{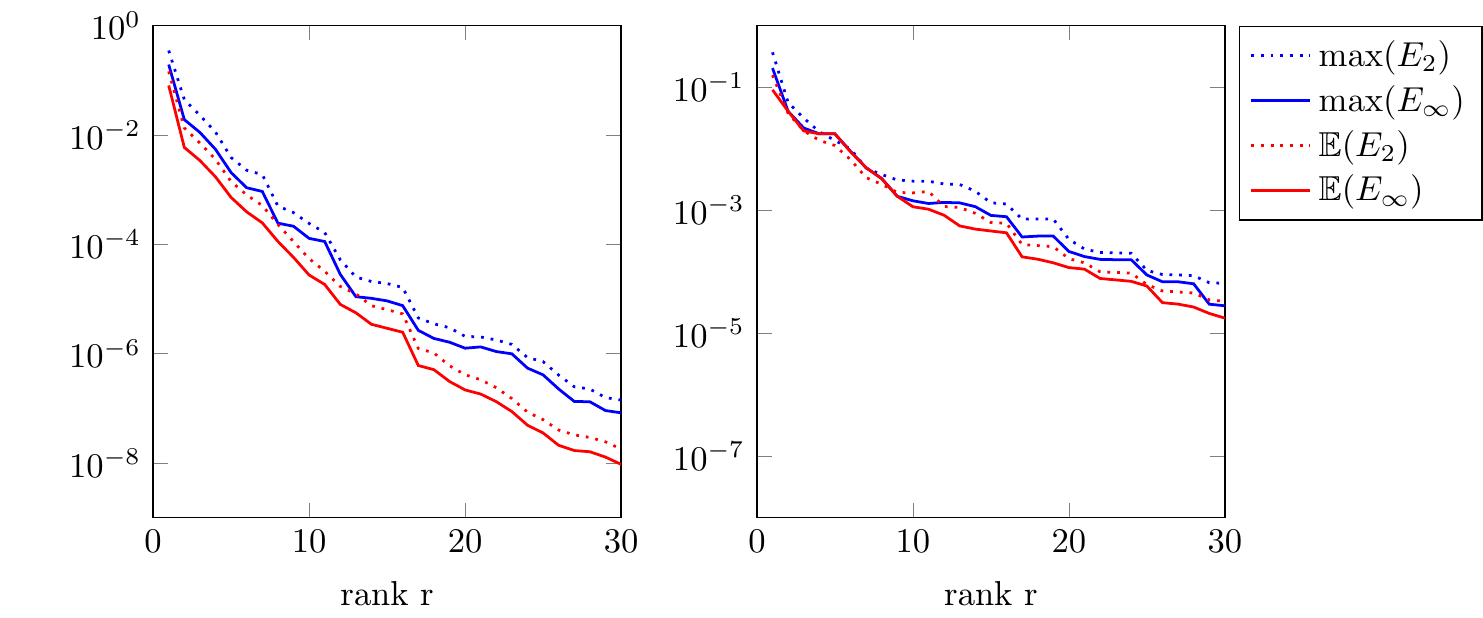}
 \caption{Test case 2:  Statistical estimations of the expectation and maximum of the relative errors $E_2$ and $E_\infty$   with respect to the reduced dimension $r$, for MTD (left) and MTI (right).\label{fig:ADVDIF3}}
\end{figure}
}

%  \mytitle{Computation with fixed rank $r$}
% \begin{center}
% {\scriptsize
% \begin{tabular}{|c|c|c||c|c|}
% \hline
%  &\multicolumn{2}{c||}{$E_2$} & \multicolumn{2}{c|}{$E_\infty$}\\
% \hline   
% r & T-G & POD-G & T-G& POD-G\\
% \hline 
% $5$   & 1.4017e-05   & 2.9247e-05   & 6.6568e-06    & 0.00098945  \\
% $10$ &  1.42e-08  & 2.1173e-05  &  3.2513e-08  & 3.4804e-05  \\
% $15$ &  2.6724e-12  &9.4355e-07  &6.1477e-12   &  2.7311e-06 \\
% $20$ &   3.0783e-15  &  5.9998e-08  & 5.5995e-15   &1.6498e-07  \\
% \hline
% \end{tabular}

{\paragraph{Error estimates}Figure \ref{fig:ADVDIF4} represents the  evolution with time of the exact  error $\|\Ve_r(t,\Vxi)\|_X$ and of the error estimate $\tilde \Delta_r(t,\Vxi)$ for both order reduction methods,  for $\Vxi=(0.2,0.5)$. Again, we observe the exponential behavior of the error estimates. The errors  obtained by MTD are several orders of magnitude lower than the errors obtained by MTI. 
\begin{figure}[H]
 \centering
  \includegraphics[height=5cm]{./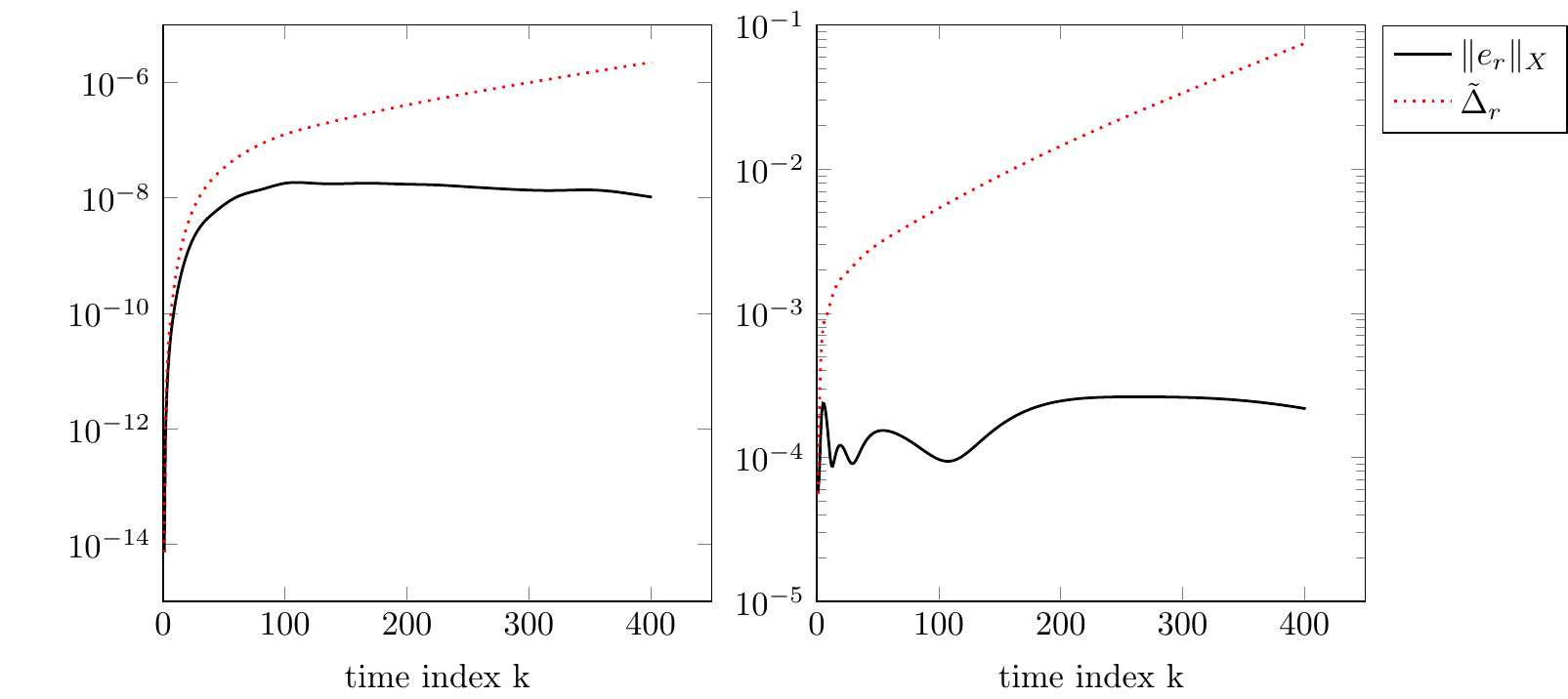}\\[0.1cm]
  \caption{Test case 2: Evolution with time of the error estimate $\tilde \Delta_r$ and true error $\|\Ve_r\|_X$ for  MTD (left) and MTI (right), for $r =30$ and $\Vxi = (0.2,0.5)$. \label{fig:ADVDIF4}}
 \end{figure}
}

{For the first time indices (i.e. $k<100$), $\tilde \Delta_r$ provides a sharper error bound for MTD than for MTI. Also, $\tilde \Delta_r$ and the true error have quite similar time evolutions for MTD, and very different time evolutions for MTI.
The superiority of MTD over MTI  is confirmed  onFigure \ref{fig:ADVDIF5}, where we observe that the expectations of the effectivity index $\kappa(t,\Vxi)$ of MTI and MTD are in a ratio of 10. For longer times, the  error estimates present similar efficiency for both approaches.
\begin{figure}[H]
 \centering
  \includegraphics[height=5cm]{./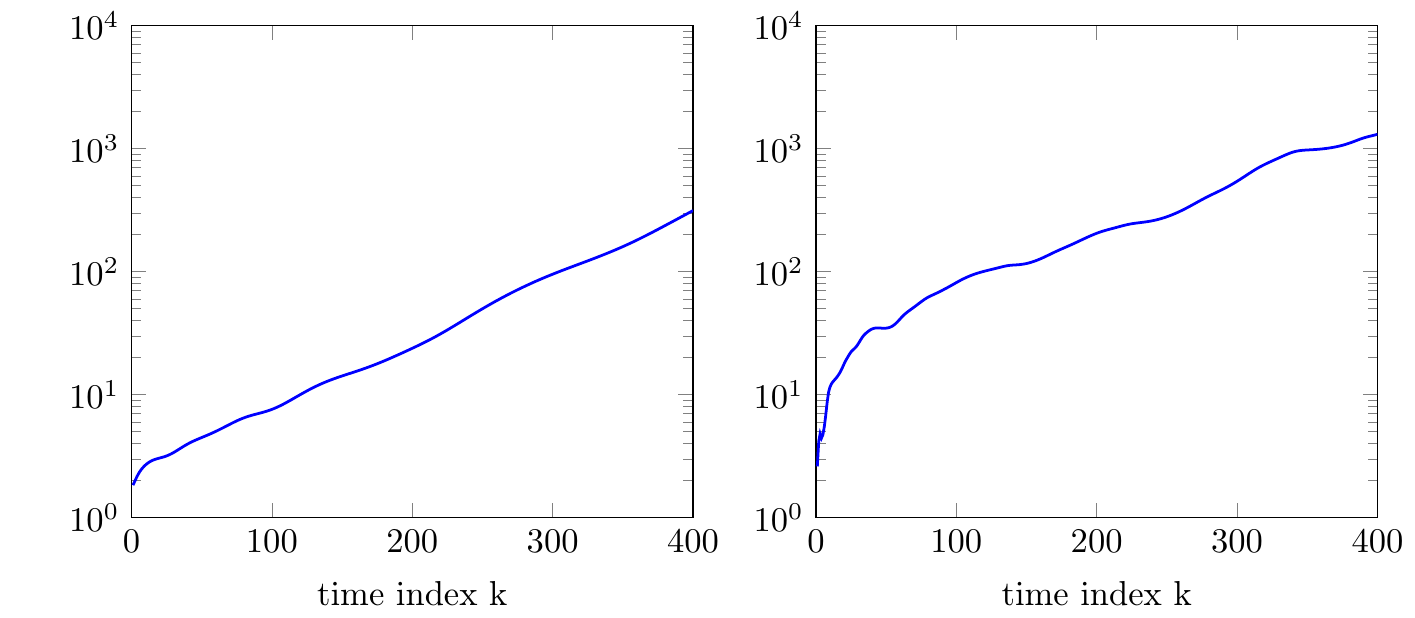}\\[0.1cm]
  \caption{Test case 2: Evolution with time of a statistical estimation of the expected effectivity index $\Exp(\kappa(t,\Vxi))$
  for MTD (left) and MTI (right). \label{fig:ADVDIF5}}
 \end{figure}
 }

{\paragraph{Study of greedy algorithms}
Now, we study the behavior of the  T-greedy and POD-greedy algorithms used for the construction of reduced  spaces for  MTD and MTI respectively. For both algorithms, the evolution with the iteration $r$ of the maximum of  $\Delta_r^{(0,T)}(\Vxi)$  over the training set $\VXi_{train}$ is plotted onFigure \ref{fig:ADVDIF6}. Note that at iteration $r$, the dimension  of the reduced space is actually $r+1$. 
We observe that the error decreases faster with $r$ for the T-greedy algorithm than for 
the POD-greedy algorithm. For example, at iteration $20$ of the greedy algorithms, 
the errors for POD-greedy and T-greedy algorithms are respectively of order
$10^{-1}$ and $10$. When stopping the greedy algorithms at a given precision, the dimension of the obtained reduced space is much smaller for T-greedy than for POD-greedy.  We observe on  Table \ref{tab:ADVDIF3} that the POD-greedy algorithm can select several times the same point in $\VXi$ (for example, the value $\Vxi=(-0.3192,0.9004)$ is
selected 6 times during the $30$ iterations). This  is due to the fact that at each iteration, only the first term of the POD  of the selected trajectory is added to the reduced space  (here $\ell=1$). Concerning the T-greedy approach, a point in $\VXi$ can be selected only once. This implies that for a fixed dimension, the POD-greedy approach requires less solutions 
of the full order dynamical system (here $5$ for POD-greedy against $30$ for the T-greedy approach when considering a reduced space of dimension $30$).} 
To illustrate this point, the number of calls to the full order problem with respect to $\max_{\xi \in \VXi_{train}}\Delta_r^{(0,T)}(\Vxi)$ is represented on the right curve ofFigure \ref{fig:ADVDIF6}. The number of calls to the full order problem for POD-greedy algorithm  remains smaller, which means that the POD-greedy algorithm uses only the information of snapshots already computed to enrich $X_r$ whereas the T-greedy algorithm requires new snapshots at each iteration. In terms of computational costs, the POD-greedy stategy presents lower offline computational costs since it requires a smaller number of calls to the full order  problem, but for a given precision, it leads to larger reduced systems in the online step. For the T-greedy algorithm, we have the opposite conclusions. For a given precision, it leads to lower dimensional reduced spaces, and therefore to lower online costs, but it requires more calls to the full-order problem in the offline step.
The MTD approach with POD-greedy remains a good alternative when treating autonomous linear dynamical systems arising from the discretization of linear parabolic problems with time independent matrices. Furthermore, contrarily to MTI, it does not require to store sequences of time dependent reduced quantities. But when considering non-autonomous dynamical systems (which requires handling time-dependent reduced quantities), the storing costs will be smaller with MTD, since this latter approach provides smaller reduced spaces for a given precision.

 \begin{figure}[H]
 \centering
 \includegraphics[height=5cm]{./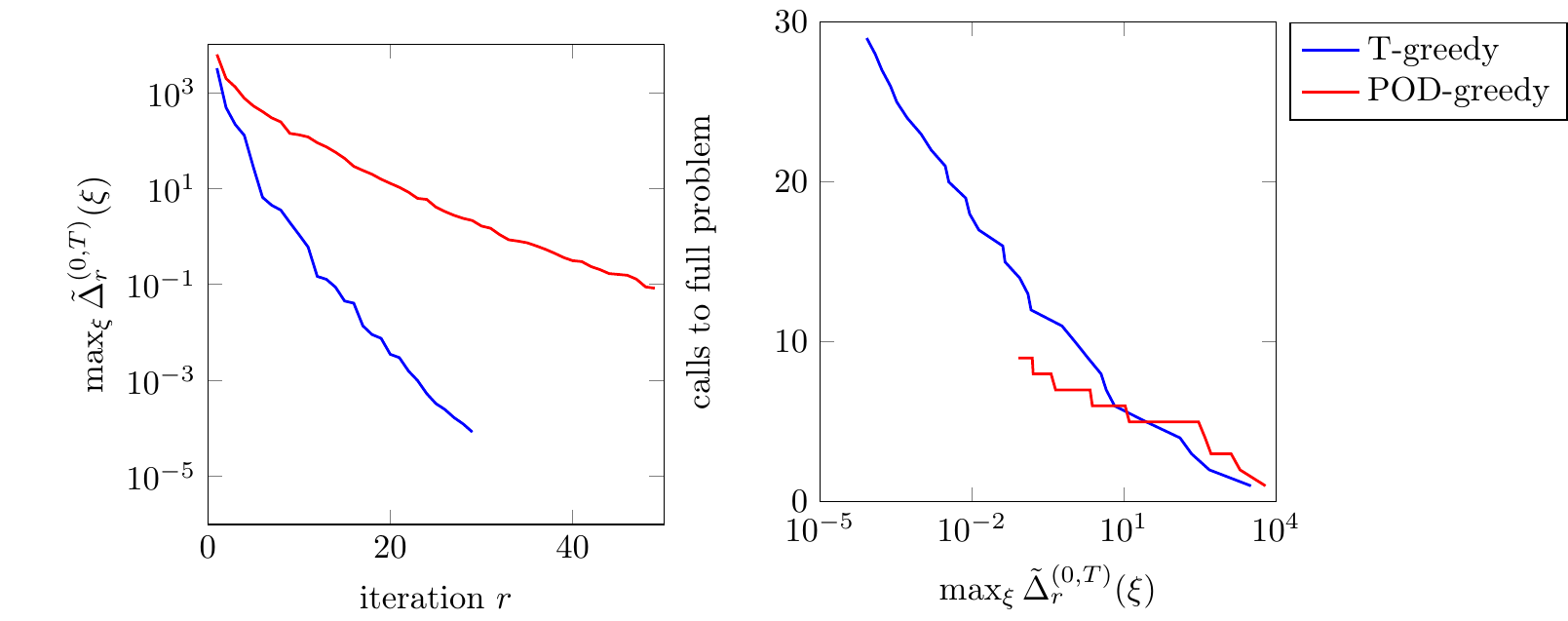}
\caption{Test case 2 : Evolution of the maximum of  $ \tilde \Delta_r^{(0,T)}(\Vxi)$ over $\VXi_{train}$ with the iteration $r$  
(left), and number of calls to the full order problem with respect to the maximum of  $\tilde \Delta_r^{(0,T)}(\Vxi)$ over $\VXi_{train}$ (right), for T-greedy (blue line) and POD-greedy (red line) algorithms. } \label{fig:ADVDIF6}
 \end{figure}
 
\begin{figure}[H]
 \centering
 \includegraphics[height=5cm]{./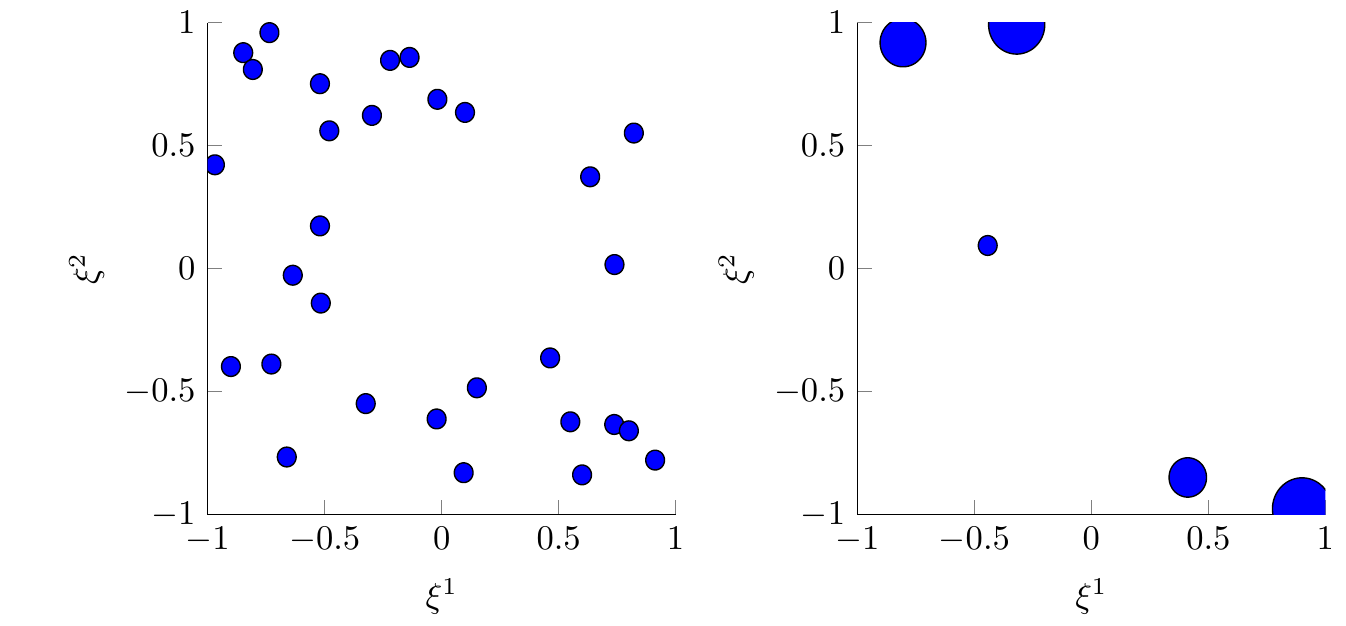}
 \caption{Test case 2: Selected points in $\VXi$ for the T-greedy (left) and  POD-greedy (right) algorithms during $30$ iterations. For POD-greedy, the size of  a point  increases with the number of times it is selected.}\label{tab:ADVDIF3}
\end{figure}

 {
 \paragraph{CPU times} 
As shown in Table \ref{tab:ADVDIFFCPU}, CPU-times for both offline and online phases are again similar, when the reduced space dimension is fixed to $r=30$ for both MTI and MTD. We still observe a small additional cost for the online phase of MTD due to computations with time-dependent reduced quantities, but  MTD  again provides  a better reduced approximation (see Figure  \ref{fig:ADVDIF3}). Concerning the POD-greedy algorithm, one could imagine to improve the offline cost by storing the  solution trajectories computed during the greedy procedure for parameters selected several times, but this would  imply additional storage costs.}
\begin{table}[H]
 \centering
% discontinu
  \begin{tabular}{|c|c|c|c|}
  \hline
& Offline - Basis selection & Offline - Reduced quantities & Online \\ %& Offline & Online\\
\hline   \hline
T-Greedy & $34907$ & $77.69$ & $37.86$\\% & POD-Greedy & $1756.8$ & $9.05$\\
POD-Greedy & $32767$  & $0.22$ & $34.6$\\%& T-Greedy & $ 2053.4$  &  $13.51 $\\
   \hline
\end{tabular}
  \caption{Test case 2: Offline and online CPU-times for both MTD and MTI.} \label{tab:ADVDIFFCPU}
\end{table}

\subsection{Test case 3}

We study a nonlinear viscous Burger's equation with uncertain parameters. This test case is adapted from \cite{Wirtz2014}. We consider a spatial domain $\Omega = (0,1)$, a time interval $I=(0,1)$, 
homogeneous Dirichlet boundary conditions and an initial condition $u^0=0$. We consider 
 $c(u,\Vxi) = \frac{1}{2}u$, $a(x,\Vxi)=0$ and a diffusion coefficient  $\mu(\Vxi) = \xi$,  with
  $\xi \sim U(0.01,0.06)$ a uniform random variable. The source term is defined by $g(x,t,\Vxi) = g_1(x,t) +  g_2(x,t)$, with 
\begin{align*}
&g_1(x,t) = 4e^{-(\frac{x-0.2}{0.03})^2}\Ind_{[0.1,0.3]}(x)\sin(4\pi t), \\
&g_2(x,t) = 4\cdot \Ind_{[0.6,0.7]}(x)\Ind_{[0.2,0.4]}(t).\\
\end{align*}
The term $g_1$ 
corresponds to an excitation localized in space and oscillating with time, 
whereas the term  $g_2$ corresponds to a constant excitation over a localized space-time region.
We use a finite difference scheme in space (with $n= 300$ nodes, and $d=298$) with uniform mesh (as in Subsection \ref{subsec:t1}) and a semi-implicit Euler scheme in time (with $K=200$ time steps), yielding the following scheme:
  \begin{equation}
(\MI_X - \delta t \MA(\Vxi)) \Vu^{k+1}(\Vxi)= \Vu^k(\Vxi)+ \delta t (\Vh(\Vu^k) + \Vg^k), \label{eq:pb14}
\end{equation}
where $ \MA \in \RR^{d \times d}$ denotes the discrete second derivative operator,
 and where the discrete flux $ \Vh(\Vu^k) \in \RR^d $ 
is defined by $(\Vh(\Vu^k))_i = - u_i^k (\MC \Vu^k)_i$ with $\MC \in \RR^{d \times d}$ the matrix associated with the discrete first derivative operator.\\

Reduced spaces are constructed with POD-greedy and T-greedy algorithms with a training set of size $60$.  For the error estimation, we have used a nearest neighbor interpolation of the Lipschitz constant of the flux $\Vh$. The maximum $M_L(\Vxi)$ and minimum $m_L(\Vxi)$ of $\frac{\tilde L _X [\Vh](\Vu_r(t,\Vxi),t,\Vxi)}{L _X[\Vh](\Vu_r(t,\Vxi),t,\Vxi)}$ over the time interval $I$ have been computed   for both MTD and MTI methods (see Table \ref{tab:BURGERS2}). We observe that the values are localized in intervals whose bounds are close to $1$,  which indicates that the approximations are quite satisfactory.  For the efficient evaluation of the nonlinear flux $\Vh$, an EIM has been used with a tolerance $\varepsilon =10^{-10}$ in the stopping criterion \eqref{eq:stoperrdeim}. For the considered simulations, {it corresponds to an average number $m$ of terms in the EIM equal to $103$ and $108$ for MTD and MTI respectively}.  As shown in  Table \ref{tab:BURGERS2}, the maximum of the relative errors $E_{\Vh}(t,\Vxi) = \|\Vh(u_r(t,\Vxi),t,\Vxi)- \tilde \Vh(u_r(t,\Vxi),t,\Vxi) \| _X/  \|\Vh(u_r(t,\Vxi),t,\Vxi) \|_X$ on the flux is of order $10^{-10}$ as expected.

   \begin{table}[H]
\centering
\begin{tabular}{c||c|c|c|c|}
& \multicolumn{2}{|c|}{$\max_t E_{\Vh}(\Vxi,t)$} 
& \multicolumn{2}{c|}{$[m_L(\Vxi),M_L(\Vxi)]$} \\
\hline
$\Vxi$     & MTI & MTD  & MTI &MTD\\
\hline \hline
$0.01$     & $1.5550~10^{-10}$   &  $4.2208~10^{-10}$   &   $[0.9116, 1.1011]$ & $[ 1.1055, 1.1297]$\\
$0.035$   & $3.5803~10^{-10}$   &  $1.4400~10^{-11}$  &   $[0.7804,0.9536]$  &$[1.0062, 1.0483]$ \\
$0.06$     & $5.0374~10^{-10}$   &  $1.4201~10^{-11}$  &   $[0.7567,1.0009]$ &$[0.9947, 0.9989]$
\end{tabular}
\caption{Test case 3 : Maximum over the time interval of the relative approximation error of the flux $\Vh$ with EIM, and minimum and maximum values $m_L$ and $M_L$ of the ratio between approximate and exact Lipschitz constants of $\Vh$, for MTD and MTI.}
 \label{tab:BURGERS2}
\end{table}

We first study the behavior of the approximations provided by the MTD and MTI evaluated for the parameter values $\Vxi \in \{0.01,0.035,0.06\}$ and for $r=15$. The evolution of the solution of the reduced order model obtained with MTD is plotted on  Figure \ref{fig:BURGERS21}. We clearly observe different features for different values of the diffusion coefficient.Figure \ref{fig:BURGERS22} represents the exact solution and the approximations obtained by MTD and MTI at final time and for a given value of the parameter. For $r=15$, the reduced approximations are in good agreement with the exact solution.  Nevertheless, we note that the approximation obtained with MTD is clearly better than with MTI.

 \begin{figure}[H]
 \centering
$\xi=0.01$ \hspace{3cm} $\xi = 0.035$  \hspace{2.5cm} $\xi=0.06$\\
 \includegraphics[height=3.1cm]{./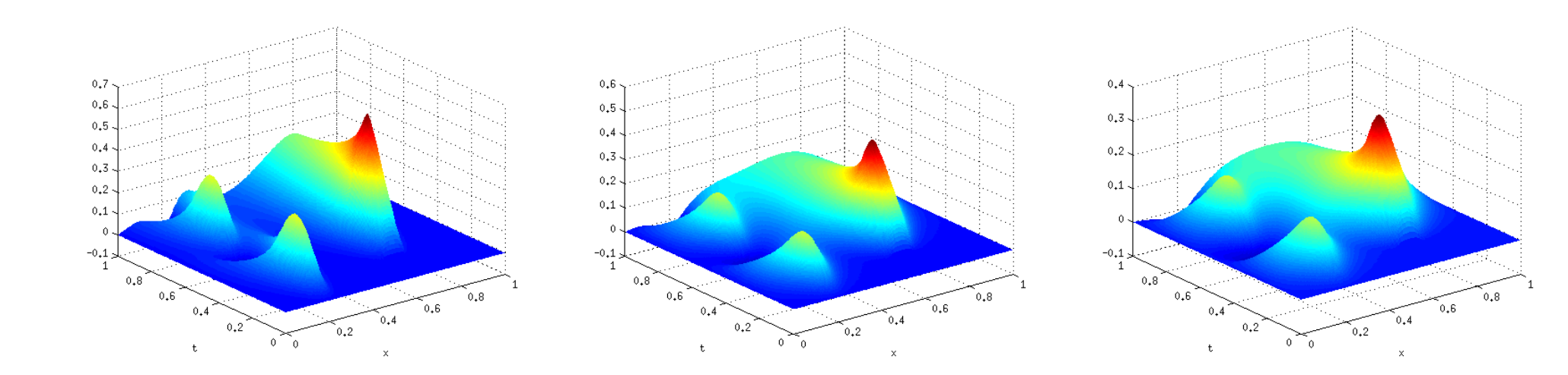}
\caption{Test case 3 : Evolution over the space-time grid of the approximation $\Vu_r$ computed with MTD for $r=15$ and 
 for $\Vxi \in \{0.01,0.035,0.06\}$.}
 \label{fig:BURGERS21}
 \end{figure}
 
  \begin{figure}[H]
 \centering
 \includegraphics[height=3.5cm]{./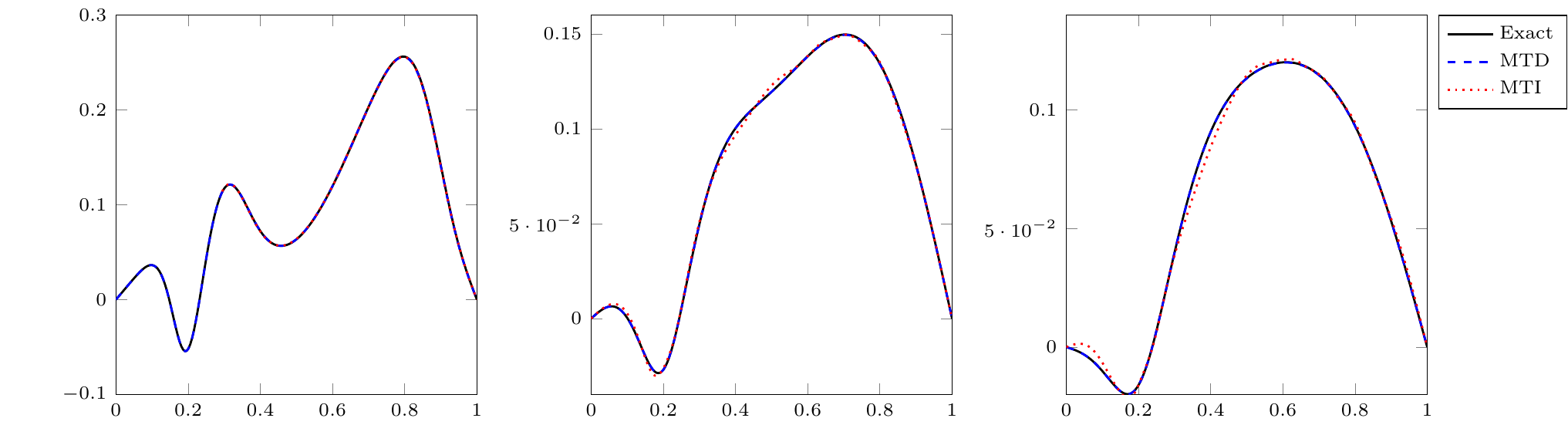}
\caption{Test case 3 : Comparison between the exact solution $\Vu$ (black line) and the approximations $\Vu_r$
obtained with  MTD (blue dashed line) and MTI (red dotted line) with $r=15$, at final time $T=0.1$ and for $\Vxi \in \{0.01,0.035,0.06\}$.}
 \label{fig:BURGERS22}
 \end{figure}

This superiority of MTD over MTI  is confirmed byFigure \ref{fig:BURGERS4} where we have plotted statistical estimations (with a random sample of size $50$) of the expectation and maximum of the relative errors. As we can see, MTD provides with $r=15$ an approximation with a relative error of $10^{-10}$, whereas  MTI   only provides an approximation with a relative error of $10^{-2}$ for the same dimension of the reduced space.  For $r=50$, MTI  provides an approximation with relative error $10^{-6}$, which is still higher than MTD with $r=15$.

 \begin{figure}[H]
\centering
\includegraphics[height=5cm]{./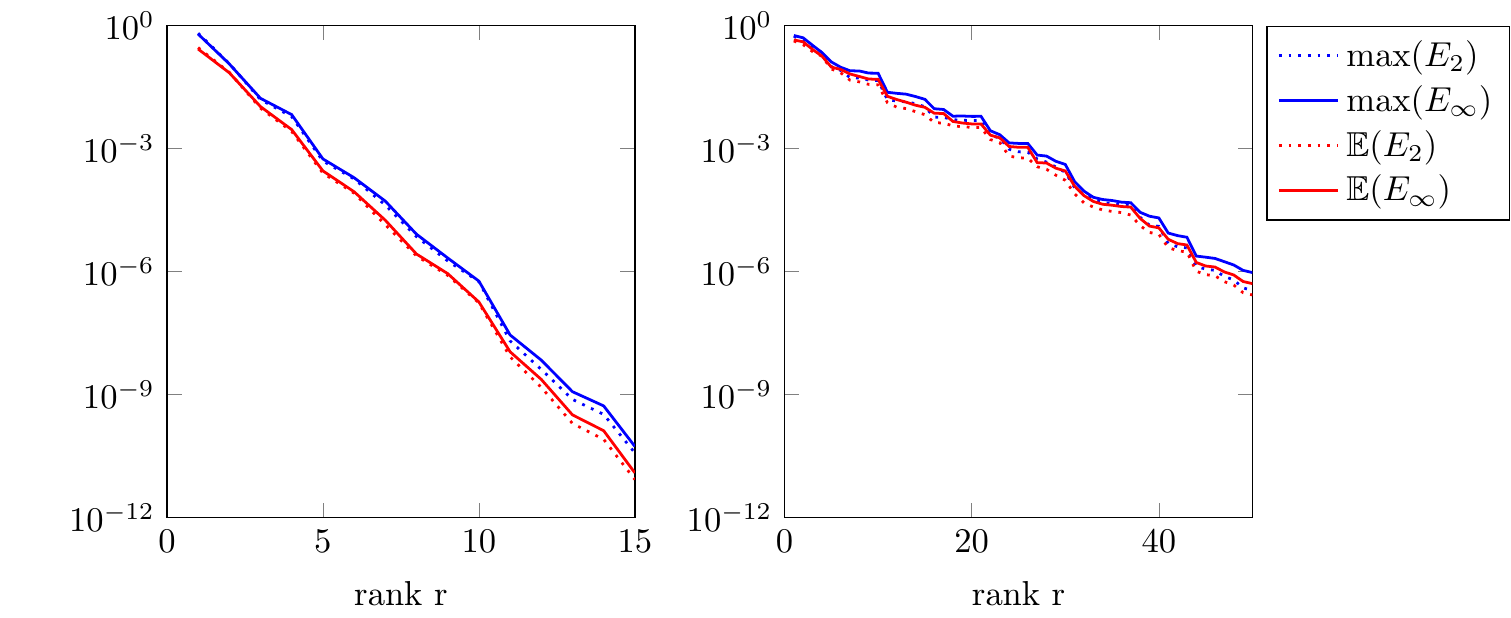}\\
 \caption{Test case 3:  Statistical estimations of  the expectations and maxima of the relative errors $E_2$ and $E_\infty$ with respect to the reduced dimension $r$, for MTD (left) and  MTI (right).}\label{fig:BURGERS4}
\end{figure}

\paragraph{Study of error estimates}

We now compare the behavior of the error estimate $\tilde \Delta_r$ for MTD and MTI. 
Figure \ref{fig:BURGERS5} represent the  evolution with time of the exact error and of the error estimate for both methods, with and without approximation of the flux and the Lipchitz constants, for $\Vxi=0.035$ and $r=15$. 
We first remark that the EIM approximation of the nonlinear flux as well as the approximated Lipschitz constants have a small impact on the behavior of the error estimate. Nevertheless we notice that for the first instants for MTD, where the error is smaller than $10^{-10}$, the EIM error is no longer negligible and  $\|\Ve_r\|_X$ and $\tilde \Delta_r$ are slightly impacted. For MTD, a small difference is observed for final times {only on} $\tilde \Delta_r$. This is probably due to the approximation of the Lipchitz constant. These differences disappear when choosing a smaller precision $\varepsilon$ for MTD or when considering larger reduced spaces for MTI. 
 Again, we obtain several orders of magnitude between the errors provided by the MTD and MTI. Here, the expectations of the effectivity index $\kappa(t,\Vxi)$ (plotted onFigure \ref{fig:BURGERS3}) are of the same order, with or without EIM, which means that the error estimate has the same efficiency for both approaches. 
\begin{figure}[H]
\centering
 \includegraphics[height=5cm]{./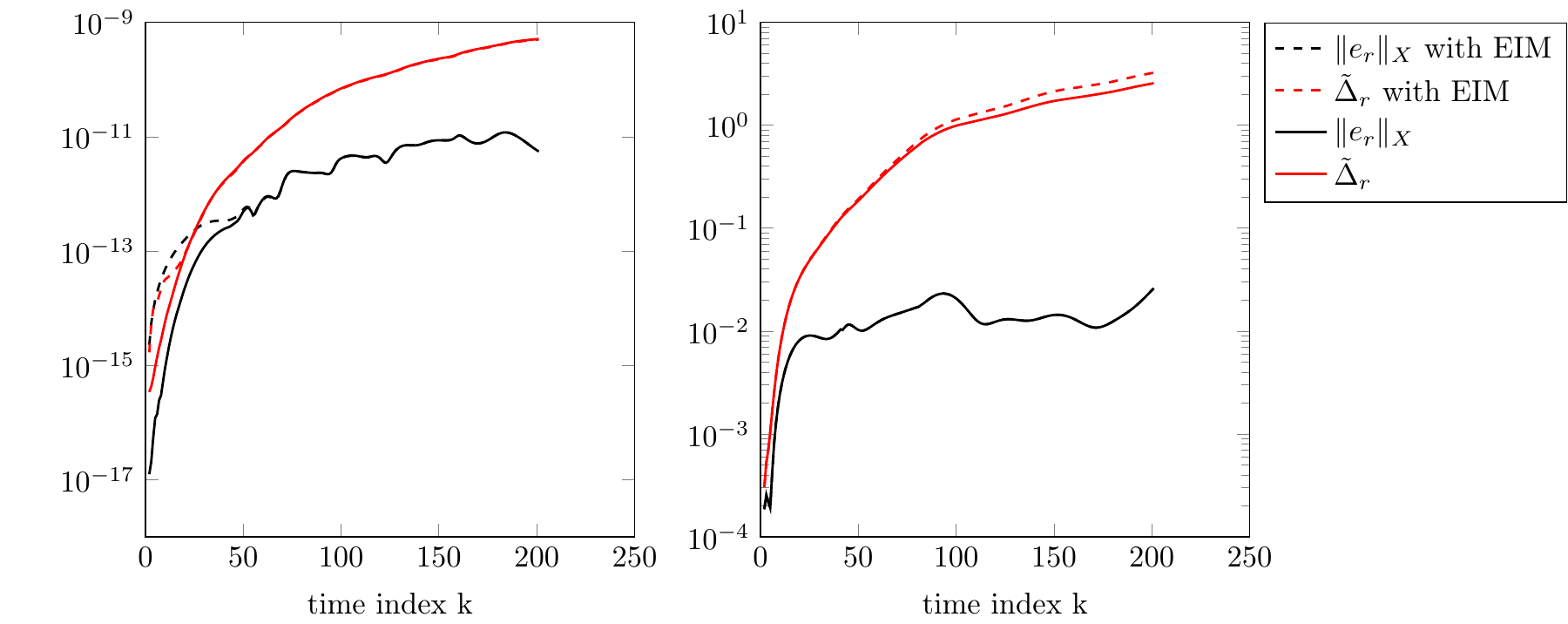}
   \caption{Test case 3:  Evolution with time of the error estimate  $\tilde \Delta_r$ (dotted line) and the exact error $\|\Ve_r\|_X$ 
  (solid line), for  MTD (left) and MTI (right), for $\Vxi =0.035$ and $r=15$.
  \label{fig:BURGERS5}}
\end{figure}

 \begin{figure}[H]
\centering
 \includegraphics[height=5cm]{./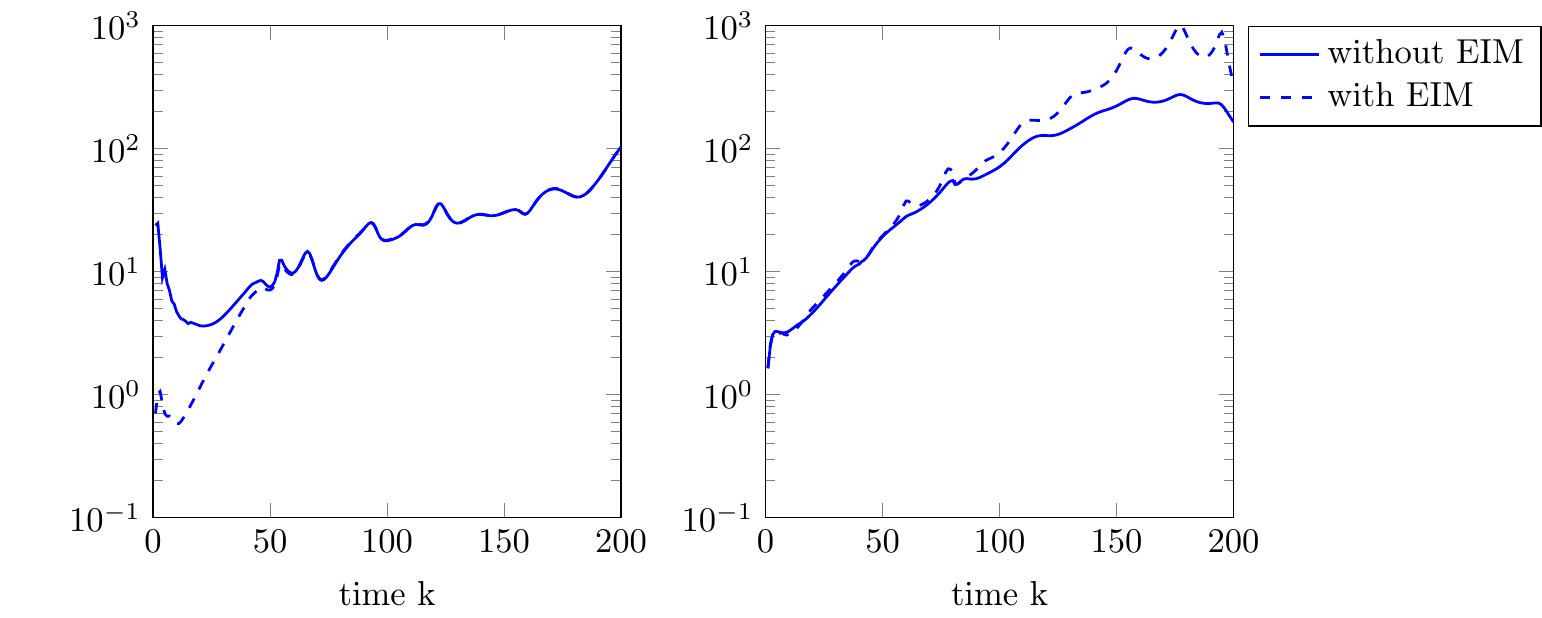}
  \caption{Test case 3:  Evolution with time of a statistical estimation of the expected effectivity index $\Exp(\kappa(t,\Vxi))$, for MTD (left) and MTI (right), for $\Vxi =0.035$ and $r=15$.\label{fig:BURGERS3}}
 \end{figure}
 
 \paragraph{Study of greedy algorithms}
 
 We finally study the behavior of the greedy algorithms used for the construction of the reduced space $X_r$. Once again, we have plotted 
  the evolution of the maximum of $ \tilde \Delta_r^{(0,T)}(\Vxi)$ over $\VXi_{train}$ with the iteration $r$  and the number of calls to the full order problem with respect to the maximum of $ \tilde \Delta_r^{(0,T)}(\Vxi)$ onFigure \ref{fig:BURGERS6}.  We observe the same behavior as for the test case 2. The maximum error during the greedy iterations decreases faster with T-greedy than with POD-greedy (T-greedy reaches a precision of order $10^{-6}$ for the iteration $14$, while   POD-greedy  reaches a precision of $10^{-2}$ for the iteration $49$). In terms of computational costs, the number of calls to the full order problem in the offline step is lower for  T-greedy algorithm than for  POD-greedy (e.g., for reaching a precision of order $10^{-2}$, T-greedy (resp. POD-greedy) algorithm requires  $7$ (resp. 11) calls to the full order problem). Contrary to the previous linear test case, the use of MTD with T-greedy algorithm clearly reduces the computational costs of both online and offline steps in comparison to a MTI combined with a POD-greedy construction of reduced spaces.  
  \begin{figure}[H]
 \centering
 \includegraphics[height=5cm]{./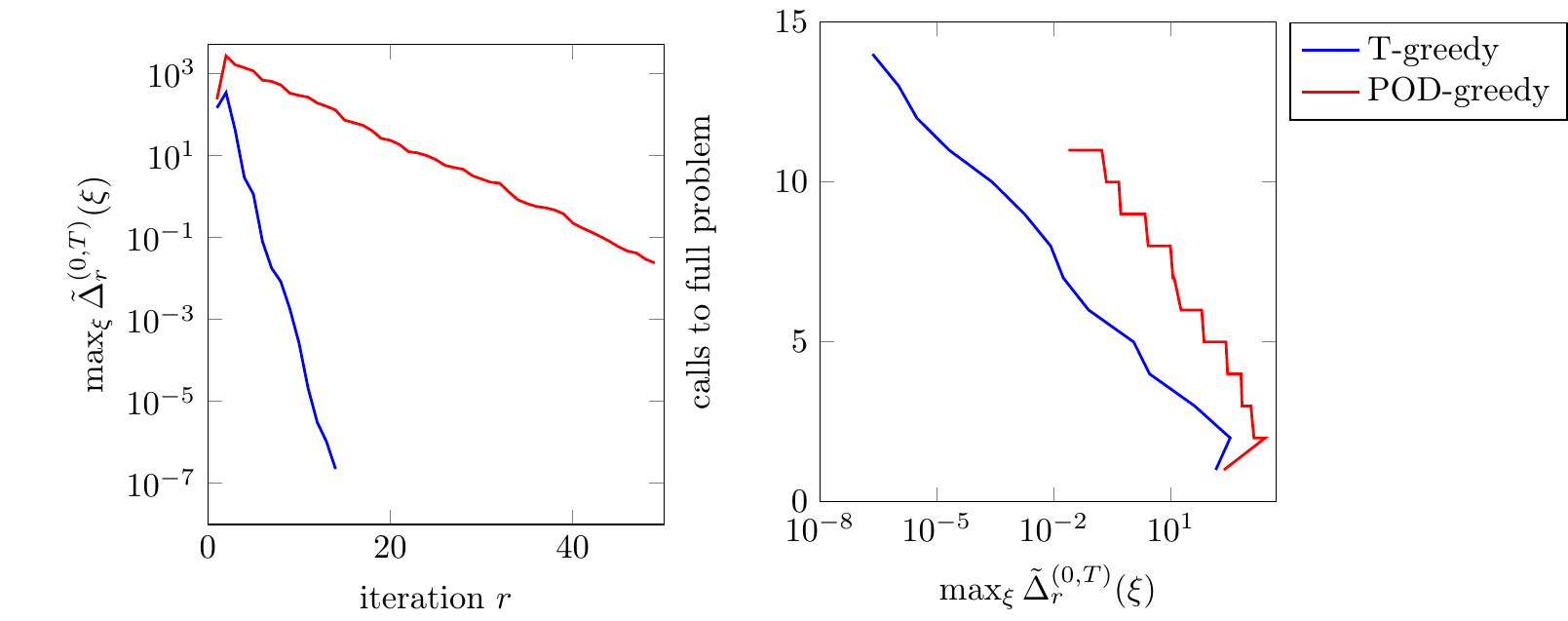}
\caption{Test case 3 : Evolution of the maximum of  $ \tilde \Delta_r^{(0,T)}(\Vxi)$ over $\VXi_{train}$ with the iteration $r$  
(left), and number of calls to the full order problem with respect to the maximum of  $\tilde  \Delta_r^{(0,T)}(\Vxi)$ over $\VXi_{train}$ (right), for T-greedy (blue line) and POD-greedy (red line) algorithms. } \label{fig:BURGERS6}
 \end{figure}

 {
 \paragraph{CPU times} 
 The dimension of reduced spaces is fixed to $r=15$ for both methods.
As summarized in the table Table \ref{tab:BURCPU}, once again for this problem, both offline and online cost are roughly of the same order for both MTI and MTD while MTD gives a finer reduced approximation which is by 8 orders of magnitude (see Figure \ref{fig:BURGERS4}) better than the one provided by MTI .} %Moreover, we do not notice any particular impact of the EIM procedure for reduced flux computation that is similar for both approaches.}
\begin{table}[H]
 \centering
% discontinu
  \begin{tabular}{|c|c|c|}
%   \multicolumn{3}{c}{With $u_{disc}^0$} &  \multicolumn{3}{c}{With $u_{cont}^0$}\\
   \hline
& Offline (Basis) & Offline (Reduction) \\ %& Offline & Online\\
\hline   \hline
MTI   & $6137.2$ &  $9.22$\\% & POD-Greedy & $1756.8$ & $9.05$\\
MTD & $6253.8$  &  $9.27$\\%& T-Greedy & $ 2053.4$  &  $13.51 $\\
   \hline
\end{tabular}
  \caption{Test case 1: Offline and online CPU-times for both MTD and MTI. \label{tab:BURCPU}}
\end{table}

 %%%%%%%%%%%%%%%%%%%%%%%%%%%%%%%%%%%%%%%%%%%
  % CONCLUSION
 %%%%%%%%%%%%%%%%%%%%%%%%%%%%%%%%%%%%%%%%%%%

\section{Conclusion}
In this paper, we have presented a projection-based model order reduction approach for the solution of parameter-dependent  
dynamical systems. It relies on a Galerkin projection of the full order dynamical system on time-dependent reduced spaces. This generalizes classical model order reduction methods with time-independent reduced spaces.  An a posteriori error estimate using the logarithmic Lipschitz constant associated to the flux has also been proposed and provides an efficient a posteriori error estimate. Using this   error estimate, we  have derived a greedy algorithm (called T-greedy algorithm) for the construction of a  sequence of  time-dependent reduced spaces. 
We have performed several numerical tests on both  linear and nonlinear dynamical systems. 
The order reduction method with time-dependent spaces constructed with a T-greedy algorithm (MTD) 
has been compared to a method with time-independent spaces constructed with a POD-greedy algorithm (MTI).
%We have demonstrated that for a given precision, MTD always yields a lower dimensional reduced order model than MTI. For non-autonomous dynamical systems, for which the reduced order models depend on time-dependent quantities even for time-independent spaces, MTD therefore yields lower online computational costs than MTI. 
%{For  autonomous linear dynamical systems (e.g., coming from the spatial discretization of linear parabolic equation), 
%MTI has smaller offline costs cost than MTD (for a fixed rank $r$) since it requires a lower number of calls to the full order model and it does not require the storage of time dependent reduced matrices. }
%However, for non-autonomous and nonlinear dynamical systems, MTD outperforms MTI in both offline and online steps. 
{For the same online costs (time for evaluating the ROM), the error in the predictions of the ROM obtained by MTD may be several orders of magnitude lower than the error in the predictions of the ROM obtained by MTI. In other words, for a given precision of the ROM, the MTD approach yields a significant reduction of online computation costs.}
%{Regarding CPU times, the methods perform in a similar way but MTD requires additional computation and storage efforts due to the computation of time dependent reduced quantities when dealing with linear and autonomous dynamical systems.}
Here, the proposed T-greedy algorithm for the selection of parameters relies on an error indicator using a $L^2$-norm in time of an error estimate.
{This global error estimate could be improved by considering sharper local in time error estimates which could be obtained by improving the local Lipschitz constants used to derive our error bound \cite{Wirtz2014}.} Moreover, a goal-oriented variant of this algorithm could be introduced by considering  norms taking into account a certain quantity of interest (e.g., the value of the solution at the final time) in order to improve the efficiency of the order reduction method for this approximation of this quantity of interest.

% BIBLIO
%%%%%%%%%%%%%%%%%%%%%%%%%%%%%%%%%%%%%%

\end{document}